\documentclass[12pt,english,a4paper,oneside]{amsart}

\usepackage[cp1251]{inputenc}
\usepackage[english]{babel}
\usepackage{amsmath,amsthm,amssymb}

\usepackage[
a4paper,
left=22mm,
right=18mm,
top=20mm,
bottom=27mm
]{geometry}

\usepackage{xcolor}
\usepackage[colorlinks]{hyperref}

\newtheorem{theorem}{Theorem}[section]
\newtheorem{lemma}[theorem]{Lemma}
\newtheorem{corollary}[theorem]{Corollary}

\numberwithin{equation}{section}
\theoremstyle{definition}

\begin{document}

\title{Absolute continuity of two-dimensional polynomial random vectors}

\author{Egor Kosov}

\address{\noindent Egor Kosov,
	Centre de Recerca Matem\`atica, Campus de Bellaterra, Edifici~C 08193
	Bellaterra (Barcelona), Spain.}
\email{kosoved09@gmail.com}

\subjclass[2020]{Primary 60E05; Secondary 60E15, 46E30, 60H07}

\keywords{Polynomial random vectors, absolute continuity, small-ball estimates, Malliavin determinant, orthogonal polynomials, product measures}

\begin{abstract}

Let $X=\{X_j\}_{j=1}^\infty$ be a sequence of independent random variables
whose densities and moments of order $2d$ are uniformly bounded. For a random
vector $f(X)=(f_1(X),f_2(X))$ whose components are polynomial functionals of
degree at most $d$, we prove that
\[
[[f]]_{\mu,\infty}^{\frac1{2d-1}}\mu(f\in A)
\le
C\bigl(\lambda_2(A)\bigr)^{\frac1{2d-1}}
\]
for every Borel set $A\subset\mathbb R^2$, where $C$ depends only on $d$ and
the uniform density and moment bounds, and $\lambda_2$ denotes the Lebesgue
measure on $\mathbb R^2$.
Here
$[[f]]_{\mu,\infty}$ measures the failure of proportionality of the
highest-order orthogonal-chaos components of $f_1$ and $f_2$ with respect to
the law $\mu$ of $X$.
Consequently, whenever these
components are not
proportional, the law of $f$ admits a density in the weak Lorentz space
$L^{\frac{2d-1}{2d-2},\infty}(\mathbb R^2)$. This recovers the dichotomy
established by Nualart and Tudor for two-dimensional Wiener chaos vectors
and extends it beyond the Gaussian setting.

We also obtain the lower bound
\[
\int_{\mathbb R^\infty}\Delta_f\,d\mu
\ge
C[[f]]_{\mu,\infty}^2,
\]
where $\Delta_f$ is the determinant of the Gram matrix of $\nabla f_1$ and
$\nabla f_2$. In the special case of Gaussian measures, this gives a relaxed
version of the estimate conjectured by Nourdin, Nualart, and Poly.
\end{abstract}

\maketitle

\section{Introduction}

Let $\gamma$ be a Gaussian measure on an abstract Wiener space $E$, and let
$\mathcal H_d(\gamma)$ denote the Wiener chaos of order $d$. Nourdin, Nualart,
and Poly \cite{NNP} raised the following two questions concerning
two-dimensional random vectors
$f=(f_1,f_2)$ with $f_1,f_2\in\mathcal H_d(\gamma)$, see
Questions~6.1 and~6.2 in \cite{NNP-arxiv}.

\smallskip

\noindent
{\bf Question 1.} Is it true that the law of
$f=(f_1,f_2)$, where $f_1,f_2\in\mathcal H_d(\gamma)$, is not absolutely
continuous if and only if $f_1$ and $f_2$ are linearly dependent?

\smallskip

It was shown in \cite{NNP} that the law of
$f=(f_1,f_2)$, with $f_1,f_2\in\mathcal H_d(\gamma)$, is not absolutely
continuous if and only if the expectation of the determinant $\Delta_f$ of
its Malliavin matrix, that is, the Gram matrix of the gradients of its
components, vanishes. This led to the following quantitative counterpart of Question~1 concerning
the relation between the Malliavin determinant and the determinant of the
covariance matrix $\operatorname{Cov}_\gamma(f)$ of $f$ with respect to
$\gamma$.

\smallskip

\noindent
{\bf Question 2.}
Is it true that there exists a constant $C(d)>0$, depending only on $d$, such
that, for every $f=(f_1,f_2)$ with
$f_1,f_2\in\mathcal H_d(\gamma)$, one has
\begin{equation}\label{NNP-conj}
\int_E\Delta_f\,d\gamma
\ge
C(d)\det\operatorname{Cov}_{\gamma}(f)\quad?
\end{equation}

\smallskip

We note that both questions were originally formulated for general
$k$-dimensional random vectors with components in a fixed Wiener chaos, but
simple examples show that both have negative answers when $k>2$, see \cite{NT17}.

For $d=2$, both questions were answered affirmatively in \cite{NNP}. The first
question was subsequently answered affirmatively for $d=3$ and $d=4$ by Tudor
\cite{Tudor13}. Nualart and Tudor \cite{NT17} proved \eqref{NNP-conj} for
$d=3$ and $d=4$. For $d\ge5$, they established a weaker substitute involving
additional nonnegative terms:
\begin{equation}\label{eq-NT}
\int_E\Delta_f\,d\gamma
+
\sum_{j=2}^{\lfloor (d-1)/2\rfloor}
C_j(d)\int_E\Delta_f^{(j)}\,d\gamma
\ge
C(d)\det\operatorname{Cov}_{\gamma}(f),
\end{equation}
where $\Delta_f^{(j)}$ denotes the determinant of the $j$th iterated
Malliavin matrix. In particular, this allowed Nualart and Tudor to provide an
affirmative answer to Question~1 for every $d\ge2$.

In this paper, we show that the phenomenon described in Question~1 and
established by Nualart and Tudor in the Gaussian setting is not specific to
Gaussian measures, but is instead a consequence of the polynomial nature of
the two-dimensional random vector and the independence of the coordinates of
the underlying random element. 
More precisely, we prove that the same dichotomy remains valid for
two-dimensional random vectors whose components belong to the orthogonal
polynomial chaos of a fixed order $d$ associated with a random element whose
coordinates are independent, have uniformly bounded densities, and satisfy a
suitable uniform moment assumption.

In the same general setting, we also establish a relaxed version of
\eqref{NNP-conj}, replacing the determinant of the covariance matrix by a
smaller coefficient-based parameter. Although the resulting estimate is
generally weaker than the conjectured Gaussian estimate, it holds in a broader
setting, and the parameter vanishes if and only if the covariance determinant
vanishes, thereby detecting exactly the same degeneracy of the distribution.

\subsection{Main results}
For $M,R>0$ and $d\in\mathbb N$, let $\mathcal{M}_d(M,R)$ denote the
family of all Borel probability measures $\nu$ on $\mathbb R$ that are
absolutely continuous and admit a density $\varrho$ satisfying
$\varrho(t)\le M$ for all $t\in\mathbb R$ and
\[
\int_{\mathbb R}|t|^{2d}\,\nu(dt)\le R^{2d}.
\]
We consider product probability measures
$\mu=\bigotimes_{j=1}^\infty\mu_j$ on $\mathbb R^\infty$ with
$\mu_j\in\mathcal M_d(M,R)$. In other words, $\mu$ is the joint law of a
sequence of independent random variables whose densities are bounded by $M$
and whose moments of order $2d$ do not exceed $R^{2d}$. We note that
every isotropic log-concave probability measure $\nu$ on $\mathbb R$ belongs
to $\mathcal M_d(1,4d)$ by
\cite[Lemma~5.5 and Theorem~5.22]{LV07}. In particular, the standard Gaussian
measure $\gamma_1$ is isotropic and log-concave. Moreover, by Tsirelson's
isomorphism theorem, every centered Radon Gaussian measure with
infinite-dimensional Cameron--Martin space is measurably linearly isomorphic
to
$\gamma:=\gamma_1^{\otimes\infty}$, the countable product of copies of $\gamma_1$, see
\cite[Theorem~3.4.4]{Gaus}. Consequently, the results below apply, in
particular, to products of isotropic log-concave measures and to the abstract
Wiener space setting.

We call a function on $\mathbb R^\infty$
cylindrical if it depends only on finitely many coordinates. Let
$\mathcal P_d(\mu)$ be the closure in $L^2(\mu)$ of the space of all
cylindrical polynomials of degree at most $d$. Let $\mathcal H_d(\mu)$ be the
orthogonal complement of $\mathcal P_{d-1}(\mu)$ in $\mathcal P_d(\mu)$.

Let $\{u_{j,m}\}_{m=0}^d$, $u_{j,0}=1$, be orthonormal polynomials in $L^2(\mu_j)$ such that
$u_{j,m}$ has degree $m$. Then the system
\[
U_{\mathbf m}(x):=\prod_{j=1}^\infty u_{j,m_j}(x_j),
\qquad
\mathbf m=(m_j)_{j=1}^\infty\in\mathbb Z_+^\infty,
\qquad
|\mathbf m|:=\sum_{j=1}^\infty m_j\le d,
\]
forms an orthonormal basis of $\mathcal P_d(\mu)$.

Let $f=(f_1,f_2)$ with
$f_1,f_2\in\mathcal P_d(\mu)$. The components admit the following expansions
in this basis, converging in $L^2(\mu)$:
\[
f_1(x)=\sum_{|\mathbf m|\le d}a_{\bf m}^\mu(f_1)U_{\mathbf m}(x),
\qquad
f_2(x)=\sum_{|\mathbf m|\le d}a_{\bf m}^\mu(f_2)U_{\mathbf m}(x).
\]
For nonconstant $f$, let
\[
d_\mu(f):=
\max\{|\mathbf m|\colon |a_{\bf m}^\mu(f_1)|+|a_{\bf m}^\mu(f_2)|\ne0\}.
\]
For $\mathbf p,\mathbf q$ with $|\mathbf p|=|\mathbf q|=d_\mu(f)$, let
\[
A^\mu_{\mathbf p,\mathbf q}(f)
=
a_{\bf p}^\mu(f_1)a_{\bf q}^\mu(f_2)
-
a_{\bf q}^\mu(f_1)a_{\bf p}^\mu(f_2)
=
\det
\begin{pmatrix}
	a_{\bf p}^\mu(f_1) & a_{\bf q}^\mu(f_1)\\
	a_{\bf p}^\mu(f_2) & a_{\bf q}^\mu(f_2)
\end{pmatrix},
\]
and set
\[
[[f]]_{\mu,2}
:=
\biggl(
\frac12
\sum_{|\mathbf p|=|\mathbf q|=d_\mu(f)}
|A^\mu_{\mathbf p,\mathbf q}(f)|^2
\biggr)^{1/2}
\quad
\text{and}
\quad
[[f]]_{\mu,\infty}
:=
\sup_{|\mathbf p|=|\mathbf q|=d_\mu(f)}
|A^\mu_{\mathbf p,\mathbf q}(f)|.
\]
For constant $f$, we set
$d_\mu(f)=0$ and
$[[f]]_{\mu,2}=[[f]]_{\mu,\infty}=0$.

We note that, for $f=(f_1,f_2)$ with $f_1,f_2\in\mathcal H_d(\mu)$, the
Cauchy--Binet formula gives
\[
[[f]]_{\mu,2}^2=\det\operatorname{Cov}_\mu(f),
\]
where $\operatorname{Cov}_\mu(f)$ denotes the covariance matrix of $f$ with
respect to $\mu$. Thus, for such random vectors $f$, the quantity
$[[f]]_{\mu,\infty}$ is the $\ell^\infty$ counterpart of
$\sqrt{\det\operatorname{Cov}_\mu(f)}$. In particular,
\begin{equation}\label{eq-equiv}
[[f]]_{\mu,\infty}=0
\Longleftrightarrow
\det\operatorname{Cov}_\mu(f)=0.
\end{equation}

Our first main result provides the following quantitative version of the
dichotomy in Question~1 beyond the Gaussian setting.

\begin{theorem}\label{th-inf-lorentz}
Let $M,R>0$ and let $d\in\mathbb N$, $d\ge2$.
There exists a constant $C(M,R,d)>0$ such that, for every
$\mu=\bigotimes_{j=1}^{\infty}\mu_j$
with $\mu_j\in\mathcal M_d(M,R)$ and every $f=(f_1,f_2)$ with
$f_1,f_2\in\mathcal P_d(\mu)$, one has
\[
[[f]]_{\mu,\infty}^{\frac1{2d-1}}\mu(f\in A)
\le
C(M,R,d)\bigl(\lambda_2(A)\bigr)^{\frac1{2d-1}}
\]
for every Borel set $A\subset\mathbb R^2$.
\end{theorem}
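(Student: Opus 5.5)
The plan is to reduce the two-dimensional problem to a family of one-dimensional small-ball estimates for polynomials along lines. Since the bound involves $\lambda_2(A)^{1/(2d-1)}$ and the left side contains $\mu(f\in A)$, it is enough, by inner regularity, to treat compact $A$. We would also first reduce to finitely many coordinates: by truncating the expansions and passing to $L^2$-limits (convergence in probability of $f$), it suffices to handle cylindrical polynomials of degree at most $d$ in $x_1,\dots,x_n$, with constants independent of $n$.

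\emph{Step 1: choose two distinguished coordinates.} Take multi-indices $\mathbf p,\mathbf q$ with $|\mathbf p|=|\mathbf q|=d_\mu(f)=:k$ that (nearly) attain $[[f]]_{\mu,\infty}$. Because $\mathbf p\neq\mathbf q$ have equal length, there is a coordinate $i$ with $p_i>q_i$. We would condition on all coordinates except $x_i$, or more carefully on all coordinates except a block, and regard $f$ as a polynomial curve $t\mapsto f(\dots,t,\dots)$ in $\mathbb R^2$ of degree at most $d$. The more robust choice is a Fubini-type decomposition. Choose a direction $\theta\in S^1$, write $g_\theta=\langle f,\theta\rangle$ and $h_\theta=\langle f,\theta^\perp\rangle$, and estimate
\[
\mu(f\in A)\le \mu\bigl(f\in A,\ |\partial_{x_i} g_\theta|\ \text{small}\bigr)+\mu\bigl(f\in A,\ |\partial_{x_i}g_\theta|\ \text{large}\bigr).
\]

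\emph{Step 2: the key mechanism.} This is a Carbery--Wright type estimate for the Jacobian-like polynomial
\[
J:=\partial_{x_i}f_1\,\partial_{x_j}f_2-\partial_{x_j}f_1\,\partial_{x_i}f_2,
\]
for a suitable pair $i,j$. Alternatively one can use the area-formula quantity $\Delta_f$ directly. $J$ is a polynomial of degree at most $2d-2$. Its top-degree orthogonal coefficients contain a multiple of $A^\mu_{\mathbf p,\mathbf q}(f)$, and we would identify this by computing the coefficient of the appropriate monomial in the highest-order chaos. The independent-coordinate analogue of Carbery--Wright, valid for product measures with bounded densities and bounded moments (proved by induction on coordinates from one-dimensional Remez/Lagrange-interpolation estimates), then gives
\[
\mu\bigl(|J|\le \varepsilon\bigr)\le C\,\bigl(\varepsilon/[[f]]_{\mu,\infty}\bigr)^{1/(2d-2)}.
\]
On the set $\{|J|>\varepsilon\}$ we apply the change of variables in the two coordinates $(x_i,x_j)$, with the others frozen. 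The map $(x_i,x_j)\mapsto f$ is polynomial of degree at most $d$, so by Bézout it has multiplicity at most $d^2$. With the density bound $M^2$ this gives
\[
\mu\bigl(f\in A,\ |J|>\varepsilon\bigr)\le d^2M^2\varepsilon^{-1}\lambda_2(A).
\]

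\emph{Step 3: optimize.} Set $t=\varepsilon/[[f]]_{\mu,\infty}$. The two bounds become $C t^{1/(2d-2)}$ and $C\lambda_2(A)/([[f]]_{\mu,\infty}\, t)$, and optimizing in $t$ gives
\[
\mu(f\in A)\lesssim \bigl(\lambda_2(A)/[[f]]_{\mu,\infty}\bigr)^{1/(2d-1)}.
\]
This is exactly the claimed inequality.

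\emph{Main obstacle.} The hard part is Step 2. We must choose the pair $(i,j)$, or more realistically average over all pairs, i.e. use $\Delta_f=\sum_{i<j}J_{ij}^2$ with $\sqrt{\Delta_f}$ of degree $2d-2$ after a square root. The difficulty is showing that some top-chaos coefficient of the $J_{ij}$ is bounded below by $c\,[[f]]_{\mu,\infty}$ uniformly in $\mu$. This requires tracking how derivatives of the orthogonal polynomials $u_{j,m}$ mix degrees: $u_{j,m}'$ has leading coefficient controlled above and below by $M,R$. It also requires ruling out cancellation among the leading terms. We would handle this by looking only at the leading monomial in the coordinates where $\mathbf p$ and $\mathbf q$ differ, where no cancellation can occur. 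For the change-of-variables step with general $\Delta_f$ we would instead use the coarea/area formula on finite-dimensional sections, which replaces $J_{ij}$ by $\sqrt{\Delta_f}$.
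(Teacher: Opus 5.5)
Your architecture matches the paper's. You truncate to finitely many coordinates and bound $\mu(f\in A,\ |\det J_f^{\mathbf r}|>\varepsilon)\le d^2M^2\varepsilon^{-1}\lambda_2(A)$ by a change of variables in $(x_{r_1},x_{r_2})$. You control $\{|\det J_f^{\mathbf r}|\le\varepsilon\}$ by a Carbery--Wright bound in degree $2d-2$ and then optimize; this is Theorem~\ref{th-lorentz} with $k=2$.

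The gap is the step you yourself call the main obstacle. You need some $\mathbf r$ with $[\det J_f^{\mathbf r}]_\infty\ge c(d)[[f]]_\infty$, uniformly in the number of variables. Your proposed mechanism is that no cancellation occurs at the monomial $x^{\mathbf p+\mathbf q-e_{r_1}-e_{r_2}}$ built from a maximizing pair. That is false. This coefficient equals
\[
\sum_{\mathbf p'+\mathbf q'=\mathbf p+\mathbf q}\tfrac12\bigl(p'_{r_1}q'_{r_2}-p'_{r_2}q'_{r_1}\bigr)A_{\mathbf p',\mathbf q'}(f),
\]
and different splittings of $\mathbf p+\mathbf q$ can cancel. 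Take $u_1=2x^3+3xy^2$ and $u_2=2x^2y+y^3$. The maximizer of $|A_{\mathbf p,\mathbf q}(u)|$, unique up to order, is $\mathbf p=(2,1)$, $\mathbf q=(1,2)$, with value $6$. Yet $\det J_u=12x^4+9y^4$, so the coefficient of $x^2y^2=x^{\mathbf p+\mathbf q-e_1-e_2}$ is $9A_{(3,0),(0,3)}+3A_{(2,1),(1,2)}=18-18=0$. The lower bound is realized by other monomials, and no explicit rule tells you which. If you instead pick a pair that is extremal for a monomial order, you do isolate a single term. But that only proves $\det J_f^{\mathbf r}\not\equiv0$; it gives no bound comparable to $\max|A_{\mathbf p,\mathbf q}|$.

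The missing ingredient is Theorem~\ref{th-algebr}, which the paper proves non-constructively in three steps.

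First, for $u$ homogeneous of degree $m$, Euler's identity gives $\sum_{j<s}x_j\det J_u^{(j,s)}-\sum_{j>s}x_j\det J_u^{(s,j)}=m\bigl(u_1\partial_{x_s}u_2-u_2\partial_{x_s}u_1\bigr)$. So if all minors vanish, $u_1/u_2$ is locally constant and every $A_{\mathbf p,\mathbf q}(u)=0$.

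Second, the minors' coefficients depend linearly on $(A_{\mathbf p,\mathbf q}(u))$, and the normalized Pl\"ucker vectors form a compact set. This gives a constant $C(m,N)$ in dimension $N$ (Lemma~\ref{lem-homogeneous-dimensional}).

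Third, dimension-freeness comes from restricting the top homogeneous parts to the at most $2d$ variables in $\operatorname{supp}\mathbf p\cup\operatorname{supp}\mathbf q$. Monomials of $\det J^{\mathbf r}_{f^{(m)}}$ supported in these variables come only from the restriction. So restricting can only decrease $[\det J^{\mathbf r}]_\infty$, while the maximal minor is kept.

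The paper also avoids differentiating the $u_{j,m}$: it passes from $[[f]]_{\mu,\infty}$ to monomial coefficients via Lemma~\ref{lem-coeff}. After that, the Carbery--Wright step (Corollary~\ref{cor-bounded-CW}) needs only the density bound. Your alternative of applying Carbery--Wright to $\sqrt{\Delta_f}$ is not available, since $\sqrt{\Delta_f}$ is not a polynomial; a single maximizing minor suffices.
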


Taking $A=\{y\in\mathbb R^2\colon |y|\le\varepsilon\}$ in
Theorem~\ref{th-inf-lorentz}, we obtain the following small-ball estimate of
Carbery--Wright type \cite{CW01}:
\[
[[f]]_{\mu,\infty}^{\frac1{2d-1}}\mu(|f|\le\varepsilon)
\le
C(M,R,d)\varepsilon^{\frac2{2d-1}}
\quad\text{for every }\varepsilon>0.
\]

Equivalently, Theorem~\ref{th-inf-lorentz} states that, if
$f=(f_1,f_2)$ with $f_1,f_2\in\mathcal P_d(\mu)$ and
$[[f]]_{\mu,\infty}>0$, then the law of $f$ admits a density
$\varrho_f$ in the weak Lorentz space
$L^{\frac{2d-1}{2d-2},\infty}(\mathbb R^2)$ satisfying
\[
\|\varrho_f\|_{L^{\frac{2d-1}{2d-2},\infty}(\mathbb R^2)}
\le
C(M,R,d)[[f]]_{\mu,\infty}^{-\frac1{2d-1}},
\]
where the standard weak-Lorentz quasi-norm is defined by
\[
\|\varrho_f\|_{L^{p,\infty}(\mathbb R^2)}
:=
\sup_{s>0}
s\bigl(\lambda_2(\varrho_f\ge s)\bigr)^{1/p}.
\]
Since $\varrho_f$ is a probability density, it follows that
$\varrho_f\in L^q(\mathbb R^2)$
for every $1\le q<\frac{2d-1}{2d-2}$.

In particular, by \eqref{eq-equiv}, for
$f=(f_1,f_2)$ with $f_1,f_2\in\mathcal H_d(\mu)$, the components are either
proportional or else the law of $f$ admits a density in
$L^{\frac{2d-1}{2d-2},\infty}(\mathbb R^2)$.

In the log-concave setting, the weak-Lorentz regularity of densities of
polynomial images was studied in \cite{KosZh} under an algebraic
nondegeneracy condition excluding polynomial relations of degree at most
$d$ between the components. By contrast, in the product setting considered
here, Theorem~\ref{th-inf-lorentz} requires only the exclusion of a linear
relation between the highest-order chaos components of $f_1$ and $f_2$.

\medskip

Our second main result is a relaxed version of the conjectured estimate
\eqref{NNP-conj} beyond the Gaussian setting.
By approximation in $L^2(\mu)$, the gradient extends uniquely from
cylindrical polynomials to $\mathcal P_d(\mu)$, see Corollary~\ref{cor-grad}. For
$f=(f_1,f_2)$ with $f_1,f_2\in\mathcal P_d(\mu)$, we set
\[
\Delta_f
:=
|\nabla f_1|^2|\nabla f_2|^2
-
\langle\nabla f_1,\nabla f_2\rangle^2
=
\det
\begin{pmatrix}
|\nabla f_1|^2 & \langle\nabla f_1,\nabla f_2\rangle\\
\langle\nabla f_1,\nabla f_2\rangle & |\nabla f_2|^2
\end{pmatrix}.
\]

\begin{theorem}\label{th-det-est}
Let $M,R>0$ and let $d\in\mathbb N$, $d\ge2$.
There exists a constant $C(M,R,d)>0$ such that, for every
$\mu=\bigotimes_{j=1}^{\infty}\mu_j$
with $\mu_j\in\mathcal M_d(M,R)$ and every $f=(f_1,f_2)$ with
$f_1,f_2\in\mathcal P_d(\mu)$, one has
\[
\int_{\mathbb R^\infty}\Delta_f\,d\mu
\ge
C(M,R,d)[[f]]_{\mu,\infty}^2.
\]
\end{theorem}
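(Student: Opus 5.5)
Since $[[f]]_{\mu,\infty}$ is attained up to a factor of two at some pair, we fix multi-indices $\mathbf p,\mathbf q$ with $|\mathbf p|=|\mathbf q|=n:=d_\mu(f)$ and $|A^\mu_{\mathbf p,\mathbf q}(f)|\ge\frac12[[f]]_{\mu,\infty}$. We may assume $f_1,f_2$ are cylindrical polynomials, by the approximation in Corollary~\ref{cor-grad} together with continuity of both sides in $L^2(\mu)$. The case $n=1$ is trivial: then $\nabla f_1,\nabla f_2$ are constant vectors, and $\Delta_f$ is their Gram determinant. That determinant dominates $|A_{\mathbf p,\mathbf q}|^2$ up to the normalizing constants of $u_{j,1}$, which are bounded below in terms of $M$ and $R$.

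For $n\ge2$, the plan is to reduce to a lower-order situation by differentiation, where everything is explicit. Write $\Delta_f=\frac12\sum_{i,k}(\partial_if_1\partial_kf_2-\partial_kf_1\partial_if_2)^2$, so it suffices to bound $\int(\partial_if_1\partial_kf_2-\partial_kf_1\partial_if_2)^2\,d\mu$ from below for one pair $(i,k)$. Let $S=\operatorname{supp}\mathbf p\cup\operatorname{supp}\mathbf q$, which has at most $2n$ coordinates.

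The key step is to apply the higher differential operator $D=\partial^{\mathbf p-e_i}$ (or a suitable mixed derivative) to make the determinant visible. The top-degree components of $\partial_if_1,\partial_kf_2$ are polynomials of degree $n-1$, and their leading monomial coefficients are exactly the coefficients $a_{\mathbf m}(f_l)$, times products of leading coefficients of the $u_{j,m}$. The 2-form $\omega_{ik}:=\partial_if_1\partial_kf_2-\partial_kf_1\partial_if_2$ is a polynomial of degree at most $2n-2$. Its coefficient at the monomial $x^{\mathbf p+\mathbf q-e_i-e_k}$ should be a combination of the $A_{\mathbf p',\mathbf q'}$. Choosing $i\in\operatorname{supp}\mathbf p$ and $k\in\operatorname{supp}\mathbf q$ with an extremal choice (e.g.\ lexicographically maximal $\mathbf p+\mathbf q$ among pairs attaining a comparable value), we want this coefficient to be bounded below by $c\,|A_{\mathbf p,\mathbf q}|$. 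We then conclude with a lemma of the form: for a polynomial $g$ of degree $\le 2n-2$ on a product of measures from $\mathcal M_d(M,R)$, $\|g\|_{L^2(\mu)}\ge c(M,R,d)\,|\text{coefficient of any top monomial}|$. This lemma holds because, conditioning on the other coordinates and using one-dimensional orthogonal expansion, the leading coefficients of the $u_{j,m}$ satisfy $\kappa_{j,m}\le c(M,R,m)$. The lower bound on $\|u\|_{L^2}$ in terms of the top coefficient follows from bounded density, since $\nu(|t-a|\le\varepsilon)\le 2M\varepsilon$ gives a Remez/small-ball lower bound. Note that degree up to $2n-2\le 2d-2$ requires only moments of order $2d$, which are available.

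The main obstacle is the combinatorial step. Cancellations among the many pairs $(\mathbf p',\mathbf q')$ contributing to the same monomial in $\omega_{ik}$ might destroy the lower bound, and the same monomial can arise from several index pairs. I would handle this by a maximality argument. Among all pairs with $|A_{\mathbf p',\mathbf q'}|\ge \theta^{\mathrm{rank}}[[f]]_\infty$, with a graded threshold, pick one extremal for a suitable order. Then every other contributing pair either has a smaller determinant by a factor $\theta$, which is absorbed since the number of contributors is bounded by $C(d)$, or is excluded by extremality. If this proves too delicate, the fallback is to use $\Delta_f\ge\frac12\omega_{ik}^2$ summed over all $i,k\in S$ and to project onto the finite-dimensional space of polynomials in the variables of $S$ (by conditional expectation, which contracts $L^2$). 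There, $\sum_{i,k\in S}\|\omega_{ik}\|^2$ is a norm-like quantity on the finitely many relevant coefficients. Vanishing only when the top components are proportional then gives the bound, by compactness and homogeneity over the compact parameter set $\mathcal M_d(M,R)^{2n}$.
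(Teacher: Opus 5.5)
\textbf{There is a genuine gap, in the step you flag as the main obstacle, and it is the heart of the theorem.} Your outer architecture is right and matches the paper: a small-ball lower bound by a top coefficient (Corollary~\ref{cor-GM-type}), plus comparison of monomial and orthonormal leading coefficients (Lemma~\ref{lem-coeff}). What you never establish is that some top coefficient of some $\omega_{ik}$ is at least $c(d)|A_{\mathbf p,\mathbf q}|$, which is Theorem~\ref{th-algebr}.

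In monomial coefficients, the coefficient of $x^{\mathbf p+\mathbf q-e_i-e_k}$ in the top part of $\omega_{ik}$ is $\sum\frac12(p'_iq'_k-p'_kq'_i)A_{\mathbf p',\mathbf q'}$ over \emph{all} ordered pairs with $\mathbf p'+\mathbf q'=\mathbf p+\mathbf q$, whatever $(i,k)$ is. So extremality in an order on sums cannot isolate one pair. Nor can any argument that treats the minors as unrelated numbers: for cubics in two variables the coefficient of $x_1^2x_2^2$ is $9A_{(3,0),(0,3)}+3A_{(2,1),(1,2)}$, and the antisymmetric array with $A_{(3,0),(0,3)}=1$, $A_{(2,1),(1,2)}=-3$ and all other entries $0$ is mapped to the zero polynomial.

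The missing idea is that $(A_{\mathbf p,\mathbf q})$ is a decomposable bivector $a_1\wedge a_2$. The Pl\"ucker relation $A_{ij}A_{kl}=A_{ik}A_{jl}-A_{il}A_{jk}$ bounds the product of two minors with equal sum by minors with lexicographically larger sums. With it, your route could be completed, using thresholds that shrink quadratically (not geometrically) from level to level, in the at most $2d$ variables of $\operatorname{supp}\mathbf p\cup\operatorname{supp}\mathbf q$.

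Your fallback is closer to the paper but fails as written, for three reasons. First, $\mathbb E[\omega_{ik}(f)\,|\,x_S]\ne\omega_{ik}(\mathbb E[f\,|\,x_S])$: lower-degree terms coming from every coordinate outside $S$ appear, so this is not a function of finitely many coefficients. Second, the parameter set is not compact. The lower-order coefficients are free, and the maps $(f_1,f_2)\mapsto(f_1+\tau f_2,f_2)$ and $(\lambda f_1,\lambda^{-1}f_2)$ leave both sides unchanged. Also, the orthonormal polynomials involve $2d$-th moments, which are not weakly continuous on $\mathcal M_d(M,R)$. Third, the fact that vanishing of all minors forces proportionality is only asserted.

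The paper proceeds as follows. It first discards the measure and the lower-order terms, using that the degree-$(2m-2)$ part of $\det J_f^{\bf r}$, $m=d(f)$, is $\det J_{f^{(m)}}^{\bf r}$. It then observes that the monomials supported in $I=\operatorname{supp}\mathbf p\cup\operatorname{supp}\mathbf q$ come only from the $I$-part $u$. Finally, it applies compactness in dimension $|I|\le 2d$ on the unit sphere of minor families, which is compact as the image of orthonormal pairs $(a_1,a_2)$. Nondegeneracy comes from Euler's identity $\sum_j x_j(\partial_ju_1\partial_su_2-\partial_su_1\partial_ju_2)=m(u_1\partial_su_2-u_2\partial_su_1)$ (Lemma~\ref{lem-homogeneous-dimensional}).

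\textbf{Second, the reduction to cylindrical $f$ via ``continuity of both sides in $L^2(\mu)$'' is invalid.} The right-hand side is not continuous: $f^\varepsilon=(U_{e_1}+\varepsilon U_{2e_1},U_{e_2})\to(U_{e_1},U_{e_2})$ in $L^2(\mu)$, yet $[[f^\varepsilon]]_{\mu,\infty}=0$ while the limit has $[[\cdot]]_{\mu,\infty}=1$. You must use the truncations $f^{(N)}$, which keep $d_\mu$ and the chosen minor.

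The left-hand side is quartic in gradients that converge only in $L^2(\mu;\ell^2)$ (Corollary~\ref{cor-grad}), and it may be $+\infty$. For $f=(x_1^d,x_1^{d-1}x_2)$ one gets $\Delta_f=d^2x_1^{4d-4}$, whereas only moments of order $2d$ are assumed. For the same reason, your remark that degree $2d-2$ needs only moments of order $2d$ is wrong for $d\ge3$. The orthogonal-expansion justification of your top-coefficient lemma is also unavailable there, since orthonormal polynomials of degree $>d$ need not exist. Only the small-ball argument, which uses no moments, is valid.

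The paper's remedy has three steps:
\begin{itemize}
\item prove the cylindrical bound for $\int\Delta_f^{1/2}\,d\mu\ge\|\det J_f^{{\bf r}_0}\|_{L^1(\mu)}$, which your small-ball lemma gives directly in $L^1$ form;
\item pass to the limit using $|\Delta_f^{1/2}-\Delta_g^{1/2}|\le|\nabla f_1||\nabla f_2-\nabla g_2|+|\nabla f_1-\nabla g_1||\nabla g_2|$;
\item finish with $\int\Delta_f\,d\mu\ge\bigl(\int\Delta_f^{1/2}\,d\mu\bigr)^2$.
\end{itemize}
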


In the Gaussian setting, the estimate of Nualart and Tudor
\eqref{eq-NT} involves additional determinants of iterated Malliavin matrices
when $d\ge5$. In contrast, Theorem~\ref{th-det-est} applies in a more general
setting and, for $f=(f_1,f_2)$ with $f_1,f_2\in\mathcal H_d(\mu)$, provides a
direct lower bound for the expectation of $\Delta_f$ itself. The price is that
$\det\operatorname{Cov}_\mu(f)=[[f]]_{\mu,2}^2$
is replaced by the smaller coefficient-based quantity
$[[f]]_{\mu,\infty}^2$.

\subsection{Main ingredients of the proofs}
At the core of the proofs of
Theorems~\ref{th-inf-lorentz} and~\ref{th-det-est} lies a new
dimension-free algebraic estimate.

Let $\mathcal P_d(\mathbb R^n;\mathbb R^k)$
denote the space of all mappings
$f=(f_1,\ldots,f_k)\colon \mathbb R^n\to\mathbb R^k$ of the form
\[
f(x)=\sum_{|\mathbf m|\le d} a_{\bf m}(f)x^{\mathbf m},
\]
where $x=(x_1,\ldots,x_n)$,
$\mathbf m=(m_1,\ldots,m_n)\in\mathbb Z_+^n$,
$|\mathbf m|=m_1+\ldots+m_n$,
$x^{\mathbf m}=x_1^{m_1}\ldots x_n^{m_n}$, and
\[
a_{\bf m}(f)=
\bigl(a_{\bf m}(f_1),\ldots,a_{\bf m}(f_k)\bigr)\in\mathbb R^k.
\]
For nonzero $f$, we set
\[
d(f):=\max\{|\mathbf m|\colon a_{\bf m}(f)\ne 0\},
\]
and for the zero polynomial we set $d(0)=0$.

When $k=1$, we write $\mathcal P_d(\mathbb R^n)$ instead of
$\mathcal P_d(\mathbb R^n;\mathbb R)$. For nonzero
$f\in\mathcal P_d(\mathbb R^n)$, let
\[
[f]_\infty:=\max_{|\mathbf m|=d(f)}|a_{\bf m}(f)|,
\]
and set $[0]_\infty=0$.

When $k=2$ and $d(f)\ge1$, for multi-indices $\mathbf p,\mathbf q$ with
$|\mathbf p|=|\mathbf q|=d(f)$, let
\[
A_{\mathbf p,\mathbf q}(f)
=
a_{\mathbf p}(f_1)a_{\mathbf q}(f_2)
-
a_{\mathbf q}(f_1)a_{\mathbf p}(f_2)
=
\det
\begin{pmatrix}
a_{\mathbf p}(f_1) & a_{\mathbf q}(f_1)\\
a_{\mathbf p}(f_2) & a_{\mathbf q}(f_2)
\end{pmatrix}
\]
and let
\[
[[f]]_\infty:=
\max_{|\mathbf p|=|\mathbf q|=d(f)}
|A_{\mathbf p,\mathbf q}(f)|.
\]
For constant $f$, we set $[[f]]_\infty=0$.

For $1\le k\le n$, set
\[
{\bf N}^k_n
:=
\{{\bf r}=(r_1,\ldots,r_k)\in\mathbb N^k\colon
1\le r_1<\ldots<r_k\le n\}.
\]
For fixed ${\bf r}\in{\bf N}^k_n$, we define the partial Jacobian matrix by
\[
J_f^{\bf r}
:=
\bigl(\partial_{x_{r_j}} f_i\bigr)_
{\substack{1\le i\le k\\ 1\le j\le k}}.
\]
By the Cauchy--Binet formula, for the Jacobian matrix $J_f$, we have
\begin{equation}\label{eq-decomp}
\Delta_f:=\det(J_fJ_f^*)
=
\sum_{{\bf r}\in{\bf N}_n^k}|\det J_f^{\bf r}|^2.
\end{equation}
Thus, for vector-valued mappings, the quantities $\det J_f^{\bf r}$ play
the role of partial derivatives.

\begin{theorem}\label{th-algebr}
Let $d\in\mathbb N$. There exists a constant $C(d)>0$ such that, for every $n\in\mathbb N$, $n\ge2$, and every nonconstant
$f=(f_1,f_2)\in\mathcal P_d(\mathbb R^n;\mathbb R^2)$, one has
\[
\max_{{\bf r}\in{\bf N}_n^2}[\det J_f^{\bf r}]_\infty
\ge
C(d)[[f]]_\infty.
\]
\end{theorem}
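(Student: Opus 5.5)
The plan is to reduce Theorem~\ref{th-algebr} to a statement about the top homogeneous parts of $f_1,f_2$. Then I would restrict to a bounded number of variables, and finish with a compactness argument that uses the $GL_2$-invariance of both sides.

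\emph{Step 1: passing to top-degree parts.} Let $D=d(f)$. If $[[f]]_\infty=0$ there is nothing to prove, so assume $[[f]]_\infty>0$. Let $g=(g_1,g_2)$ be the homogeneous degree-$D$ parts of $f_1,f_2$; then $[[g]]_\infty=[[f]]_\infty$. For each ${\bf r}\in{\bf N}^2_n$, the polynomial $\det J_f^{\bf r}$ has degree at most $2D-2$, and its homogeneous part of degree $2D-2$ is exactly $\det J_g^{\bf r}$. Hence, whenever $\det J_g^{\bf r}\not\equiv0$, we get $d(\det J_f^{\bf r})=2D-2$ and $[\det J_f^{\bf r}]_\infty=[\det J_g^{\bf r}]_\infty$. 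It therefore suffices to prove
\[
\max_{\bf r}[\det J_g^{\bf r}]_\infty\ge C(d)[[g]]_\infty .
\]
Indeed, if the maximum on the left is positive, it is attained at some ${\bf r}$ with $\det J_g^{\bf r}\not\equiv0$, and for that ${\bf r}$ the two brackets coincide. The case $D=1$ is immediate: then $\det J_g^{\bf r}=A_{e_{r_1},e_{r_2}}(g)$ is a constant.

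\emph{Step 2: dimension reduction.} Choose $\mathbf p,\mathbf q$ with $|A_{\mathbf p,\mathbf q}(g)|=[[g]]_\infty$. Let $S=\operatorname{supp}\mathbf p\cup\operatorname{supp}\mathbf q$, so that $|S|\le 2D$ (adjoin an extra index if $|S|=1$). Define $\tilde g$ by setting $x_j=0$ for $j\notin S$. The coefficients of $\tilde g$ are a subset of those of $g$, and $A_{\mathbf p,\mathbf q}(\tilde g)=A_{\mathbf p,\mathbf q}(g)$. For ${\bf r}\subset S$, differentiation in $x_{r_1},x_{r_2}$ commutes with this restriction. Hence $\det J_{\tilde g}^{\bf r}$ is the restriction of $\det J_g^{\bf r}$, and its coefficients are a subset of the coefficients of $\det J_g^{\bf r}$. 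So it suffices to prove the inequality for homogeneous degree-$D$ pairs in $m\le 2D$ variables. This is a finite-dimensional problem whose dimension depends only on $d$.

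\emph{Step 3: compactness via $GL_2$-invariance.} For $T\in GL_2$ we have $A_{\mathbf p,\mathbf q}(Tg)=\det T\cdot A_{\mathbf p,\mathbf q}(g)$ and $\det J^{\bf r}_{Tg}=\det T\cdot\det J^{\bf r}_g$, so both sides scale the same way. Let $a(g_i)$ denote coefficient vectors, viewed in a fixed Euclidean space of dimension $N(d)$. Using $T$, we may assume $a(g_1)\perp a(g_2)$ and $|a(g_1)|=|a(g_2)|=1$; this is the only use of the hypothesis that $g_1,g_2$ are not proportional. On this compact set, $[[g]]_\infty\le 2$. The left-hand side is continuous, so it remains to show it never vanishes there.

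\emph{Main obstacle: nonvanishing of the Jacobian minors.} The key point is the claim that if $\det J_g^{\bf r}\equiv0$ for all ${\bf r}$, with $g_1,g_2$ homogeneous of the same degree and $g_1\not\equiv0$, then $g_2=\lambda g_1$. To prove it, note that the Jacobian then has rank $\le1$ everywhere. On the open dense set where $\nabla g_1\ne0$ this gives $\nabla g_2=\lambda(x)\nabla g_1$ with $\lambda$ smooth. Euler's identity $x\cdot\nabla g_i=Dg_i$ yields $g_2=\lambda g_1$. Differentiating gives $g_1\nabla\lambda=0$, so $\lambda$ is locally constant off $\{g_1=0\}\cup\{\nabla g_1=0\}$. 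This set is a proper algebraic set, so its complement is connected (or I would argue by analyticity), and hence $\lambda$ is a global constant. Orthonormality of the coefficient vectors excludes proportionality, so the minimum of the left-hand side on the compact set is a positive constant $c(d)$. Undoing the normalization gives the inequality with $C(d)=c(d)/2$.
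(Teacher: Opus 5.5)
Your route is the paper's: pass to the top homogeneous parts, restrict to the at most $2d$ variables indexed by $\operatorname{supp}\mathbf p\cup\operatorname{supp}\mathbf q$, normalize the two coefficient vectors to an orthonormal pair, and conclude by compactness. Your substitution $x_j=0$ for $j\notin S$ is a slightly cleaner way to get the paper's ``subvector of coefficients'' claim. The compactness step needs only that the vanishing of all $\det J_g^{\bf r}$ forces $g_2=\lambda g_1$.

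Steps 1--3 are correct. The one step that fails as written is inside that key claim. You assert that the complement of the proper algebraic set $\{g_1=0\}\cup\{\nabla g_1=0\}$ is connected. That is true in $\mathbb C^n$, but over $\mathbb R$ a hypersurface typically disconnects space: for $g_1=x_1x_2$ the set $\{g_1\ne0\}$ has four components. So local constancy of $\lambda$ does not give global constancy by that reasoning. Your parenthetical appeal to analyticity helps only if it is applied to the polynomial $g_2-cg_1$, not to $\lambda$. The domain of $\lambda$, namely $\{\nabla g_1\ne0\}$, can itself be disconnected (take $g_1=x_1^2$).

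The fix is immediate and is exactly how the paper finishes: you never need $\lambda$ to be globally constant. Since $\nabla g_1\not\equiv0$ by Euler's identity, the set $\{g_1\ne0\}\cap\{\nabla g_1\ne0\}$ is an intersection of two open dense sets, hence nonempty and open. It therefore contains a ball $B$ on which $\lambda\equiv c$. The polynomial $g_2-cg_1$ then vanishes on $B$, hence identically.

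The paper avoids $\lambda$ altogether. Contracting the minors against $x_j$ and using Euler's identity gives $u_1\partial_{x_s}u_2-u_2\partial_{x_s}u_1\equiv0$ for every $s$, i.e.\ $\nabla(u_1/u_2)=0$ on $\{u_2\ne0\}$, and then the same one-ball argument applies. With this correction your proof is complete.
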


Theorem~\ref{th-algebr} enters the proof of
Theorem~\ref{th-det-est} through the decomposition \eqref{eq-decomp}.
Indeed, an integral norm of a polynomial can often be bounded below in terms
of the coefficients of its leading homogeneous part. For example, the argument of Glazer and Mikulincer \cite{GM22} yields that,
if each $\mu_j$ is an isotropic log-concave probability measure and
$g\in\mathcal P_d(\mathbb R^n)$, then
\[
\int_{\mathbb R^n}|g|^2\,d\mu_1\otimes\cdots\otimes\mu_n
\ge
C(d)[g]_2^2,
\quad\text{where}\quad
[g]_2
:=
\biggl(
\sum_{|\mathbf m|=d(g)}
|a_{\mathbf m}(g)|^2
\biggr)^{1/2}.
\]
We show in Corollary~\ref{cor-GM-type} that, for the more general class of
product measures whose factors have densities bounded by $M$, the following estimate holds:
\[
\int_{\mathbb R^n}|g|\,d\mu_1\otimes\cdots\otimes\mu_n
\ge
C(M,d)[g]_\infty.
\]
Applying this estimate with degree $2d-2$ to the polynomials
$\det J_f^{\mathbf r}$ and combining it with \eqref{eq-decomp} and
Theorem~\ref{th-algebr}, we obtain the finite-dimensional
version of Theorem~\ref{th-det-est}:
\[
\int_{\mathbb R^n}\Delta_f\,d\mu_1\otimes\cdots\otimes\mu_n
\ge
C(M,d)[[f]]_\infty^2
\]
for every $f\in\mathcal P_d(\mathbb R^n;\mathbb R^2)$.

\medskip

Theorem~\ref{th-algebr} enters the proof of
Theorem~\ref{th-inf-lorentz} through the following general Lorentz-type
regularity estimate for polynomial random vectors generated by independent
random variables with uniformly bounded densities.

\begin{theorem}\label{th-lorentz}
Let $a,M>0$ and let $k,d,n\in\mathbb N$ with $n\ge k$ and $d\ge2$. Let
$\mu=\mu_1\otimes\cdots\otimes\mu_n$,
where each $\mu_j$ is a probability measure with density $\varrho_j$ satisfying
\[
0\le \varrho_j(x_j)\le M
\quad \forall x_j\in\mathbb R,
\quad \forall j\in\{1,\ldots,n\}.
\]
Let
$f\in\mathcal P_d(\mathbb R^n;\mathbb R^k)$
satisfy
\[
\max_{{\bf r}\in{\bf N}_n^k}[\det J_f^{\bf r}]_\infty\ge a.
\]
Then
\[
\mu(f\in A)
\le
20k^2d^2(1+M)^2a^{-\frac{1}{k(d-1)+1}}
\bigl(\lambda_k(A)\bigr)^{\frac{1}{k(d-1)+1}}
\]
for every Borel set $A\subset\mathbb R^k$.
\end{theorem}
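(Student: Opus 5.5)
Fix $\mathbf r\in\mathbf N_n^k$ with $[\det J_f^{\mathbf r}]_\infty\ge a$. By Fubini it suffices to prove the bound for the conditional laws: freeze the coordinates $y=(x_j)_{j\notin\mathbf r}$ and consider the map $g_y\colon\mathbb R^k\to\mathbb R^k$, $g_y(z)=f(z,y)$, where $z=(x_{r_1},\ldots,x_{r_k})$ carries the measure $\mu_{r_1}\otimes\cdots\otimes\mu_{r_k}$ with density at most $M^k$. The Jacobian of $g_y$ is $D_y(z):=\det J_f^{\mathbf r}(z,y)$, a polynomial of degree at most $k(d-1)$ in $(z,y)$. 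The strategy is to split the $z$-space into the set where $|D_y|\ge s$ and its complement, for a threshold $s>0$ optimized at the end:
\[
\mu(f\in A)\le \mu(|D|<s)+\int\mu_{\mathbf r}\bigl(z\colon g_y(z)\in A,\ |D_y(z)|\ge s\bigr)\,\mu_{\mathbf r^c}(dy).
\]

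For the second term I would use the change of variables formula. The multiplicity of $g_y$ (the number of preimages of a regular value in a bounded region) is controlled by B\'ezout's theorem: it is at most $d^k$. Hence
\[
\int_{\{g_y\in A,|D_y|\ge s\}}M^k\,dz\le M^k s^{-1}\int_{\{g_y\in A\}}|D_y|\,dz\le M^k d^k s^{-1}\lambda_k(A).
\]
To avoid the dependence $M^k$ and the B\'ezout multiplicity, it may be cleaner to choose a single coordinate: by a Laplace-type expansion, $D_y$ is affine in each column, but it is simplest to keep the $k$-dimensional change of variables.

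For the first term I need a small-ball estimate for the polynomial $D=\det J_f^{\mathbf r}$ of degree $m\le k(d-1)$ under a product measure with densities bounded by $M$, in terms of $[D]_\infty$. The plan is to pick a top-degree monomial $x^{\mathbf p}$ with $|a_{\mathbf p}(D)|=[D]_\infty\ge a$. I then integrate variable by variable: viewing $D$ as a one-variable polynomial in $x_{j}$ whose leading coefficient in the relevant direction contains $a_{\mathbf p}$, I apply the one-dimensional estimate $\lambda_1(|p|<s)\le 4(s/|\text{lead}|)^{1/\deg}$ (Remez/Chebyshev) and induct on the variables in $\mathbf p$. The result should be $\mu(|D|<s)\le C\,m(1+M)(s/a)^{1/m}$. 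A cruder bound with the power $1/m$ suffices, since only the exponent matters. This inductive small-ball estimate is the main obstacle. It requires care because conditioning yields leading coefficients that are themselves polynomials, which must again be bounded below via their own top coefficients. The fix is an induction in which the statement is exactly of the form "$\mu(|D|<s)\le c\,(s/[D]_\infty)^{1/d(D)}$". The exponents combine through a layer-cake estimate, with the loss absorbed into constants like $20k^2d^2$.

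Finally, I combine the two terms:
\[
\mu(f\in A)\le C(s/a)^{1/m}+C s^{-1}\lambda_k(A),
\]
and optimize with $s=a^{1/(m+1)}\lambda_k(A)^{m/(m+1)}$. This gives $\mu(f\in A)\le C a^{-1/(m+1)}\lambda_k(A)^{1/(m+1)}$ with $m+1=k(d-1)+1$, matching the claimed exponent. It then remains to track constants to reach the stated form.
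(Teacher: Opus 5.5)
Your overall architecture matches the paper's. You split according to the size of $D=\det J_f^{\mathbf r}$, bound the part where $|D|$ is large by change of variables with multiplicity at most $d^k$ (this is exactly Lemma~\ref{lem-lorentz}, applied fibrewise in $x_{\mathbf r}$), bound the rest by a small-ball estimate for $D$ in terms of $[D]_\infty$, and optimize the threshold. The gap is in the small-ball estimate. You rightly flag it as the main obstacle, but the method you propose for it fails.

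The leading coefficient of $D$ as a polynomial in a single coordinate $x_j$ contains $a_{\mathbf p}(D)$ only if $p_j=\deg_{x_j}D$, and in general no such coordinate exists. Worse, the top-degree coefficients of that leading coefficient can be arbitrarily small compared with $[D]_\infty$. Take $D=\delta x_1^2+x_1x_2+\delta x_2^2$; this is $\partial_{x_1}f$ for a cubic $f$, so it already occurs for $k=1$. It has $[D]_\infty=1$, attained at $\mathbf p=(1,1)$, but its leading coefficient in either variable is $\delta$. Conditioning and the Remez/Chebyshev bound then give only $\mu(|D|<s)\lesssim M(s/\delta)^{1/2}$, which blows up as $\delta\to0$. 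The required bound is of order $M s^{1/2}$. Strengthening the inductive hypothesis to the form $\mu(|D|<s)\le c\,(s/[D]_\infty)^{1/d(D)}$ does not repair this. The polynomial you would feed into it, the leading coefficient in $x_j$, has no $[\,\cdot\,]_\infty$ comparable to $[D]_\infty$.

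The missing idea is to differentiate rather than extract leading coefficients. Choose $j$ with $p_j\ge1$ and freeze the other coordinates. Then apply the one-dimensional case of the same change-of-variables bound you already use: $\mu(D\in B)\le\mu(|\partial_{x_j}D|\le\varepsilon)+mM\varepsilon^{-1}\lambda_1(B)$ with $m=d(D)$. Now $\partial_{x_j}D$ has degree exactly $m-1$, and $[\partial_{x_j}D]_\infty\ge p_j|a_{\mathbf p}(D)|\ge[D]_\infty$. Induction on the degree, starting from the linear case (handled by the coordinate carrying the maximal coefficient), plus optimization in $\varepsilon$, gives $[D]_\infty^{1/m}\mu(|D|\le s)\le 2m^2M(2s)^{1/m}$. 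This is Theorem~\ref{th-bounded-lorentz} and Corollary~\ref{cor-bounded-CW}.

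Finally, state that bound for the nominal degree $k(d-1)$ rather than for $d(D)$; the paper does this via the trivial case distinction at the end of the proof of Theorem~\ref{th-bounded-lorentz}. Your last step silently assumes $d(D)=k(d-1)$. Also, the exponent $1/d(D)$ is meaningless when $D$ is a nonzero constant, as happens for linear $f$.
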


This theorem can be viewed as a polynomial analogue of the result of
Rudelson and Vershynin \cite{RV15} for linear mappings
(see also \cite{LPP16,MMX17} for further developments). More precisely, for the same class of product measures with uniformly bounded
marginal densities,
their result implies that, for a linear mapping $f(x)=Lx$,
\[
\mu(f\in A)
\le
(CM)^k a^{-1}\lambda_k(A)
\]
for every Borel set $A\subset\mathbb R^k$, provided that
\[
\det(J_fJ_f^*)=\sum_{{\bf r}\in{\bf N}_n^k}|\det J_f^{\bf r}|^2
\ge a^2.
\]
We use Theorem~\ref{th-lorentz} together with the algebraic estimate in
Theorem~\ref{th-algebr} to obtain a finite-dimensional version of
Theorem~\ref{th-inf-lorentz}.

\subsection{Notation}

Let $C_0^\infty(\mathbb R^n)$ denote the space of all smooth compactly
supported functions on $\mathbb R^n$. Let $\langle x,y\rangle$ denote the
standard inner product on both $\mathbb R^n$ and $\ell^2$, and let $|x|$
denote the corresponding norm. 
Let $\lambda_n$ denote the Lebesgue measure on $\mathbb R^n$.
For a measurable mapping
$g\colon\mathbb R^\infty\to\mathbb R^n$ or
$g\colon\mathbb R^\infty\to\ell^2$, we use the notation
\[
\|g\|_{L^p(\mu)}:=\||g|\|_{L^p(\mu)}\quad 
\text{for}\quad p\in[1,\infty).
\]

Throughout the paper, constants are denoted by $C$ and may change from line
to line. Their dependence on parameters is always indicated explicitly, for
instance by writing $C(M,R,d)$. When several constants appear in the same
argument, we distinguish them by subscripts, writing $C_1,C_2$, and so on.
These subscripts are only labels and do not indicate any additional
dependence.

\subsection{Structure of the paper}

The rest of the paper is organized as follows. In
Section~\ref{sect-algebr}, we prove the key algebraic estimate stated in
Theorem~\ref{th-algebr}. Section~\ref{sect-lorentz} is devoted to the proof
of the general Lorentz-type regularity theorem, Theorem~\ref{th-lorentz},
and auxiliary results. Finally, in Section~\ref{sect-main}, we prove our main
results, Theorems~\ref{th-inf-lorentz} and~\ref{th-det-est}.

\section{Algebraic estimate}
\label{sect-algebr}

\subsection{A dimension-dependent estimate for two homogeneous polynomials}

\begin{lemma}\label{lem-homogeneous-dimensional}
Let $n,d\in\mathbb N$ and $n\ge2$. Then there exists a constant
$C(d,n)>0$ such that, for every $u=(u_1,u_2)$, where
\[
u_1(x_1,\ldots,x_n)
=
\sum_{|{\bf m}|=d}a_{\bf m}(u_1)x^{{\bf m}}
\quad\text{and}\quad
u_2(x_1,\ldots,x_n)
=
\sum_{|{\bf m}|=d}a_{\bf m}(u_2)x^{{\bf m}},
\]
one has
\[
\max_{{\bf r}\in {\bf N}_n^2}
[\det J_u^{\bf r}]_\infty
\ge
C(d,n)[[u]]_\infty.
\]
\end{lemma}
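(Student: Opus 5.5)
Since $n$ and $d$ are fixed, the plan is a compactness argument on the finite-dimensional space of pairs of homogeneous polynomials. Everything is homogeneous. If $u$ is replaced by $(tu_1,su_2)$, both $[[u]]_\infty$ and each $\det J_u^{\bf r}$ scale by $ts$. More generally, if $u$ is replaced by $Bu=(b_{11}u_1+b_{12}u_2,\,b_{21}u_1+b_{22}u_2)$ with $B\in GL_2(\mathbb R)$, then every $A_{\mathbf p,\mathbf q}(u)$ and every $\det J_u^{\bf r}$ is multiplied by $\det B$. The case $d=1$ is trivial: $\det J^{\bf r}_u$ is then the constant $A_{e_{r_1},e_{r_2}}(u)$, so one may take $C=1$. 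Hence I assume $d\ge2$.

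\textbf{Step 1: injectivity (the main obstacle).} I need to show that if $[[u]]_\infty>0$, that is, if $u_1,u_2$ are linearly independent, then some $\det J_u^{\bf r}$ is a nonzero polynomial. Equivalently, if all $2\times2$ minors of the Jacobian vanish identically, then $\nabla u_1$ and $\nabla u_2$ are parallel at every point, and I must conclude that $u_1,u_2$ are linearly dependent. The Jacobian criterion suggests how. Vanishing of all minors means $du_1\wedge du_2=0$, so $u_1$ and $u_2$ are functionally dependent on the open set where $\nabla u_1\ne0$. Concretely, $u_2$ is locally constant on the level sets of $u_1$, so locally $u_2=\phi(u_1)$ for some function $\phi$.

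Homogeneity then forces $\phi$ to be linear. Evaluating along rays gives $u_2(tx)=t^du_2(x)$ and $u_1(tx)=t^du_1(x)$, so $\phi(t^dc)=t^d\phi(c)$ for $c=u_1(x)\neq0$. This gives $\phi(s)=\kappa s$ on a half-line, hence $u_2=\kappa u_1$ on an open set and therefore everywhere, since both are polynomials.

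An alternative, more algebraic route uses Euler's identity $\sum_i x_i\partial_iu_j=du_j$. If $\nabla u_2=\lambda(x)\nabla u_1$ wherever $\nabla u_1\ne0$, then contracting with $x$ gives $du_2=\lambda\, du_1$. So $\lambda=u_2/u_1$ is a rational function of degree $0$ with $\nabla(u_2/u_1)=0$ (by the quotient rule and parallelism), hence $\lambda$ is constant. I would present this Euler-identity version, being careful with the locus where $u_1=0$ or $\nabla u_1=0$. If $u_1\equiv0$, the conclusion is immediate; otherwise these are proper algebraic subsets, and the identities extend by density.

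\textbf{Step 2: compactness.} Let $V$ be the finite-dimensional space of pairs of degree-$d$ homogeneous polynomials. Both $F(u)=[[u]]_\infty$ and $G(u)=\max_{\bf r}[\det J_u^{\bf r}]_\infty$ are continuous on $V$. Here $\det J_u^{\bf r}$ is homogeneous of degree $2d-2$ (or identically zero), so $[\cdot]_\infty$ is just the max norm of its coefficients, and $G$ is a continuous seminorm-like function. I cannot simply take the minimum of $G$ over the sphere $\{F=1\}$, because that set is not compact: adding a multiple of $u_1$ to $u_2$ leaves $F$ unchanged.

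I will use the $GL_2$ invariance to normalize instead. Choose indices $\mathbf p,\mathbf q$ attaining $[[u]]_\infty$. Apply $B=\bigl(\begin{smallmatrix}a_{\mathbf p}(u_1)&a_{\mathbf q}(u_1)\\ a_{\mathbf p}(u_2)&a_{\mathbf q}(u_2)\end{smallmatrix}\bigr)^{-1}$, so that $v=Bu$ satisfies $a_{\mathbf p}(v)=(1,0)$ and $a_{\mathbf q}(v)=(0,1)$. Maximality of $|A_{\mathbf p,\mathbf q}|$, together with Cramer's rule, shows that every other coefficient of $v$ has modulus at most $1$. Each coordinate of $a_{\mathbf m}(v)$ is a ratio $A_{\cdot,\cdot}/A_{\mathbf p,\mathbf q}$.

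Thus $v$ ranges over a compact set $K$ of pairs with $F(v)=1$, on which $u_1,u_2$ are independent. On $K$, $G>0$ by Step 1. Since $G$ is continuous, $G\ge c(d,n)>0$ on $K$, where the minimum is taken over finitely many choices of $(\mathbf p,\mathbf q)$.

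\textbf{Step 3: undo the normalization.} We have $G(u)=|\det B|^{-1}G(v)$ and $|\det B|^{-1}=|A_{\mathbf p,\mathbf q}(u)|=[[u]]_\infty$. Therefore $G(u)\ge c(d,n)[[u]]_\infty$, which is the claim.
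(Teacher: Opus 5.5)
Your proof is correct and follows essentially the same route as the paper: the Euler-identity argument showing that the identical vanishing of all minors $\det J_u^{\bf r}$ forces $u_1$ and $u_2$ to be proportional, then a compactness argument after factoring out the $GL_2$-symmetry, then a final homogeneity rescaling. The only difference is the normalization. The paper works with the family of minors $(A_{\mathbf p,\mathbf q}(u))$ normalized in $\ell^2$ and obtains compactness as the image of orthonormal pairs $(a_1,a_2)$, which gives the slightly stronger bound with $\bigl(\frac12\sum|A_{\mathbf p,\mathbf q}(u)|^2\bigr)^{1/2}$ on the right. Your Cramer's-rule normalization $a_{\mathbf p}(v)=(1,0)$, $a_{\mathbf q}(v)=(0,1)$ instead produces a compact set directly in coefficient space, adapted to the $\ell^\infty$ quantity $[[u]]_\infty$.
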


\begin{proof}
We recall that
\[
[[u]]_\infty
=
\max_{|{\bf p}|=|{\bf q}|=d}
|A_{{\bf p},{\bf q}}(u)|,
\]
where
\[
A_{{\bf p},{\bf q}}(u)
:=
a_{\bf p}(u_1)a_{\bf q}(u_2)
-
a_{\bf q}(u_1)a_{\bf p}(u_2).
\]
Clearly,
\[
A_{{\bf p},{\bf q}}(u)=-A_{{\bf q},{\bf p}}(u),
\quad
A_{{\bf p},{\bf p}}(u)=0.
\]

For ${\bf r}=(r_1,r_2)\in{\bf N}_n^2$, we have
\begin{align*}
\det J_u^{\bf r}
&=
\partial_{x_{r_1}}u_1\,\partial_{x_{r_2}}u_2
-
\partial_{x_{r_2}}u_1\,\partial_{x_{r_1}}u_2
\\
&=
\sum_{|{\bf p}|=|{\bf q}|=d}
(p_{r_1}q_{r_2}-p_{r_2}q_{r_1})
a_{\bf p}(u_1)a_{\bf q}(u_2)
x^{{\bf p}+{\bf q}-e_{r_1}-e_{r_2}}
\\
&=
\frac{1}{2}
\sum_{|{\bf p}|=|{\bf q}|=d}
(p_{r_1}q_{r_2}-p_{r_2}q_{r_1})
A_{{\bf p},{\bf q}}(u)
x^{{\bf p}+{\bf q}-e_{r_1}-e_{r_2}}
\\
&=
\sum_{|{\bf m}|=2d-2}
\biggl(
\sum_{\substack{|{\bf p}|=|{\bf q}|=d\\
{\bf p}+{\bf q}-e_{r_1}-e_{r_2}={\bf m}}}
\frac{1}{2}(p_{r_1}q_{r_2}-p_{r_2}q_{r_1})
A_{{\bf p},{\bf q}}(u)
\biggr)x^{\bf m}.
\end{align*}
Thus, the coefficients of all polynomials $\det J_u^{\bf r}$ depend linearly
on the family
\[
\bigl(A_{{\bf p},{\bf q}}(u)\bigr)_{|{\bf p}|=|{\bf q}|=d}.
\]

We claim that if
\[
\det J_u^{\bf r}\equiv0
\quad
\forall\,{\bf r}\in{\bf N}_n^2,
\]
then
\[
A_{{\bf p},{\bf q}}(u)=0
\quad
\forall ({\bf p}, {\bf q}),\quad |{\bf p}|=|{\bf q}|=d.
\]
Indeed, if $u_2\equiv0$, then all coefficients $a_{\bf m}(u_2)$ vanish, and the
conclusion is immediate. Assume now that $u_2\not\equiv0$. It is readily verified that
\[
\sum_{j=1}^n x_j\partial_{x_j}u_1=\langle \nabla u_1(x),x\rangle=du_1,
\quad
\sum_{j=1}^n x_j\partial_{x_j}u_2=\langle \nabla u_2(x),x\rangle=du_2.
\]
Fix $s\in\{1,\ldots,n\}$. Since all minors $\det J_u^{\bf r}$ vanish
identically, we have
\begin{align*}
0
\equiv
\sum_{j<s}x_j\det J_u^{(j,s)}
-
\sum_{j>s}x_j\det J_u^{(s,j)}
&=
\sum_{j=1}^n
x_j
\bigl(
\partial_{x_j}u_1\,\partial_{x_s}u_2
-
\partial_{x_s}u_1\,\partial_{x_j}u_2
\bigr)
\\
&=
du_1\partial_{x_s}u_2
-
du_2\partial_{x_s}u_1.
\end{align*}
Hence
\[
u_1\partial_{x_s}u_2-u_2\partial_{x_s}u_1\equiv0
\quad
\forall s\in\{1,\ldots,n\}.
\]
Therefore, on the open set $\{u_2\ne0\}$,
\[
\nabla\Bigl(\frac{u_1}{u_2}\Bigr)\equiv0.
\]
Since $u_2$ is not identically zero, the set $\{u_2\ne0\}$ contains a
nonempty open ball $B$. On this ball,
\[
u_1=cu_2
\]
for some constant $c$. Since $u_1-cu_2$ is a polynomial and vanishes on a
nonempty open ball, it vanishes identically. Thus
\[
a_{\bf m}(u_1)=c\,a_{\bf m}(u_2)
\quad
\forall {\bf m},\quad |{\bf m}|=d,
\]
and consequently
\[
A_{{\bf p},{\bf q}}(u)=0
\quad \forall ({\bf p}, {\bf q}),\quad |{\bf p}|=|{\bf q}|=d.
\]
This proves the claim.

Now consider the set
\[
S
:=
\biggl\{
\bigl(A_{{\bf p},{\bf q}}(u)\bigr)_{|{\bf p}|=|{\bf q}|=d}
\colon
\frac{1}{2}
\sum_{|{\bf p}|=|{\bf q}|=d}
|A_{{\bf p},{\bf q}}(u)|^2=1
\biggr\}.
\]

We show that $S$ is compact. By the Cauchy--Binet formula,
\[
\frac{1}{2}
\sum_{|{\bf p}|=|{\bf q}|=d}
|A_{{\bf p},{\bf q}}(u)|^2
=
|a_1|^2|a_2|^2-\langle a_1,a_2\rangle^2,
\]
where
\[
a_i=\bigl(a_{\bf m}(u_i)\bigr)_{|{\bf m}|=d},
\quad i\in\{1,2\}.
\]
Replacing $a_1$ by $a_1+\tau a_2$ does not change any minor
$A_{{\bf p},{\bf q}}(u)$. Thus every nonzero family of minors can be
represented by two orthogonal coefficient vectors. If the family belongs to
$S$, then these vectors can be rescaled by reciprocal factors so that
\[
|a_1|=|a_2|=1,
\quad
\langle a_1,a_2\rangle=0.
\]
Hence $S$ is the image of the compact set
\[
\{(a_1,a_2)\colon |a_1|=|a_2|=1,\ \langle a_1,a_2\rangle=0\}
\]
under the continuous map
\[
(a_1,a_2)\mapsto
\bigl(A_{{\bf p},{\bf q}}(u)\bigr)_{|{\bf p}|=|{\bf q}|=d}.
\]
Therefore, $S$ is compact.

Since the coefficients of each $\det J_u^{\bf r}$ are obtained by linear
transformations from the family
\[
\bigl(A_{{\bf p},{\bf q}}(u)\bigr)_{|{\bf p}|=|{\bf q}|=d},
\]
the function
\[
\bigl(A_{{\bf p},{\bf q}}(u)\bigr)_{|{\bf p}|=|{\bf q}|=d}
\mapsto
\max_{{\bf r}\in{\bf N}_n^2}[\det J_u^{\bf r}]_\infty
\]
is continuous and nonnegative on $S$.
By the claim proved above, this function
does not vanish on $S$. Consequently,
\[
C(d,n)
:=
\min_{S}
\max_{{\bf r}\in{\bf N}_n^2}[\det J_u^{\bf r}]_\infty
>0.
\]

Finally, by homogeneity,
\[
\max_{{\bf r}\in{\bf N}_n^2}[\det J_u^{\bf r}]_\infty
\ge
C(d,n)
\Bigl(\frac{1}{2}
\sum_{|{\bf p}|=|{\bf q}|=d}
|A_{{\bf p},{\bf q}}(u)|^2\Bigr)^{1/2}
\ge 
C(d,n)[[u]]_\infty
\]
for arbitrary homogeneous polynomial mappings $u$. This gives the claimed estimate.
\end{proof}

\subsection{Proof of Theorem~\ref{th-algebr}}

If $[[f]]_\infty=0$, there is nothing to prove. Assume that $[[f]]_\infty>0$
and set
$m:=d(f)\ge1$.
Let
\[
f_1=\sum_{j=0}^{m} f_1^{(j)}
\quad\text{and}\quad
f_2=\sum_{j=0}^{m} f_2^{(j)}
\]
be the decompositions of $f_1$ and $f_2$ into homogeneous components, where
$f_1^{(j)}$ and $f_2^{(j)}$ are homogeneous of degree $j$. Then
\[
\det J_f^{\bf r}
=
\sum_{s=1}^{m}\sum_{j=1}^{m}
\Bigl(
\partial_{x_{r_1}}f_1^{(s)}\,\partial_{x_{r_2}}f_2^{(j)}
-
\partial_{x_{r_2}}f_1^{(s)}\,\partial_{x_{r_1}}f_2^{(j)}
\Bigr).
\]
The summand with indices $(s,j)$ is homogeneous of degree $s+j-2$. Hence the
homogeneous component of $\det J_f^{\bf r}$ of degree $2m-2$ is
$\det J_{f^{(m)}}^{\bf r}$,
where
$f^{(m)}:=\bigl(f_1^{(m)},f_2^{(m)}\bigr)$.

Write
\[
f_1^{(m)}(x)=\sum_{|{\bf m}|=m}a_{\bf m}(f_1^{(m)})x^{{\bf m}}
\quad\text{and}\quad
f_2^{(m)}(x)=\sum_{|{\bf m}|=m}a_{\bf m}(f_2^{(m)})x^{{\bf m}}.
\]
Choose multiindices ${\bf p},{\bf q}$, $|{\bf p}|=|{\bf q}|=m$, such that
\[
\bigl|
a_{\bf p}(f_1^{(m)})a_{\bf q}(f_2^{(m)})
-
a_{\bf q}(f_1^{(m)})a_{\bf p}(f_2^{(m)})
\bigr|
=
[[f]]_\infty.
\]
Let
\[
I=\{j_1,\ldots,j_N\}:=\operatorname{supp}{\bf p}\cup\operatorname{supp}{\bf q}.
\]
Since the chosen minor is nonzero, we have $N\ge2$. Moreover,
\[
N=|I|\le 2m\le 2d.
\]
For $x_I=(x_{j_1},\ldots,x_{j_N})\in\mathbb R^N$, define
\[
u_1(x_I)=
\sum_{\substack{|{\bf m}|=m\\ \operatorname{supp}{\bf m}\subset I}}
a_{\bf m}(f_1^{(m)})x_{j_1}^{m_{j_1}}\ldots x_{j_N}^{m_{j_N}}
\quad\text{and}\quad
u_2(x_I)=
\sum_{\substack{|{\bf m}|=m\\ \operatorname{supp}{\bf m}\subset I}}
a_{\bf m}(f_2^{(m)})x_{j_1}^{m_{j_1}}\ldots x_{j_N}^{m_{j_N}}.
\]
Then $u=(u_1,u_2)$ is homogeneous of degree $m$. Since
\[
\operatorname{supp}{\bf p}\subset I
\quad\text{and}\quad
\operatorname{supp}{\bf q}\subset I,
\]
the chosen maximal minor is retained in the coefficient family of $u$.
Therefore,
\begin{equation}\label{eq-U-f}
[[u]]_\infty\ge [[f]]_\infty.
\end{equation}
By Lemma~\ref{lem-homogeneous-dimensional}, applied in dimension $N$, we obtain
\begin{equation}\label{eq-lem-app}
\max_{1\le r_1<r_2\le N}
\bigl[\det J^{(j_{r_1}j_{r_2})}_{u}\bigr]_\infty
\ge
C(d)[[u]]_\infty,
\end{equation}
where
\[
C(d):=\min_{\substack{1\le m\le d\\ 2\le N\le 2d}} C(m,N)>0.
\]

Let $v=(v_1,v_2)$, where
\[
v_1(x):=
\sum_{\substack{|{\bf m}|=m\\ \exists \ell\notin I\colon m_\ell\ne 0}}
a_{\bf m}(f_1^{(m)})x^{{\bf m}}
\quad\text{and}\quad
v_2(x):=
\sum_{\substack{|{\bf m}|=m\\ \exists \ell\notin I\colon m_\ell\ne 0}}
a_{\bf m}(f_2^{(m)})x^{{\bf m}}.
\]
Then
\[
f_1^{(m)}(x)=u_1(x_I)+v_1(x)
\quad\text{and}\quad
f_2^{(m)}(x)=u_2(x_I)+v_2(x).
\]
For fixed $1\le r_1<r_2\le N$, viewing $u_1,u_2$ as polynomials on
$\mathbb R^n$ depending only on the variables indexed by $I$, we have
\begin{align*}
\det J^{(j_{r_1},j_{r_2})}_{f^{(m)}}
&=
\det J^{(j_{r_1}j_{r_2})}_{u}
+
\det J^{(j_{r_1},j_{r_2})}_v
\\
&\quad
+
\partial_{x_{j_{r_1}}}v_1\,\partial_{x_{j_{r_2}}}u_2
-
\partial_{x_{j_{r_2}}}v_1\,\partial_{x_{j_{r_1}}}u_2
\\
&\quad
+
\partial_{x_{j_{r_1}}}u_1\,\partial_{x_{j_{r_2}}}v_2
-
\partial_{x_{j_{r_2}}}u_1\,\partial_{x_{j_{r_1}}}v_2.
\end{align*}
Every monomial of $v_1$ and $v_2$ contains at least one variable $x_\ell$ with
$\ell\notin I$. Differentiation with respect to $x_{j_{r_1}}$ or
$x_{j_{r_2}}$ cannot remove such a variable, because $j_{r_1},j_{r_2}\in I$.
Hence every nonzero monomial in each term involving $v_1$ or $v_2$ still
contains at least one variable outside $I$. Consequently, the monomials of
$\det J^{(j_{r_1},j_{r_2})}_{f^{(m)}}$ supported entirely in the variables
indexed by $I$ come only from the term $\det J^{(j_{r_1}, j_{r_2})}_u$. Therefore, the
coefficient vector of $\det J^{(j_{r_1},j_{r_2})}_u$ is a subvector of the coefficient
vector of $\det J^{(j_{r_1},j_{r_2})}_{f^{(m)}}$, and so
\[
\bigl[\det J^{(j_{r_1},j_{r_2})}_{f^{(m)}}\bigr]_\infty
\ge
\bigl[\det J^{(j_{r_1},j_{r_2})}_{u}\bigr]_\infty.
\]

By \eqref{eq-lem-app}, we choose $1\le r_1<r_2\le N$ such that
\[
\bigl[\det J_u^{(j_{r_1},j_{r_2})}\bigr]_\infty
\ge
C(d)[[u]]_\infty.
\]
Then
\[
\bigl[\det J^{(j_{r_1},j_{r_2})}_{f^{(m)}}\bigr]_\infty
\ge
C(d)[[u]]_\infty.
\]
In particular,
\[
\det J^{(j_{r_1},j_{r_2})}_{f^{(m)}}\not\equiv0.
\]
Since $\det J^{(j_{r_1},j_{r_2})}_{f^{(m)}}$ is the homogeneous component of
$\det J_f^{(j_{r_1},j_{r_2})}$ of degree $2m-2$, the polynomial
$\det J_f^{(j_{r_1},j_{r_2})}$ has degree $2m-2$. Therefore, by the definition
of $[\,\cdot\,]_\infty$,
\[
\bigl[\det J_f^{(j_{r_1},j_{r_2})}\bigr]_\infty
=
\bigl[\det J^{(j_{r_1},j_{r_2})}_{f^{(m)}}\bigr]_\infty.
\]
Taking into account \eqref{eq-U-f}, we obtain
\[
\max_{{\bf r}\in {\bf N}_n^2}[\det J_f^{\bf r}]_\infty
\ge
C(d)[[f]]_\infty,
\]
which completes the proof.
\qed

\section{Lorentz-type regularity estimates}
\label{sect-lorentz}

\subsection{The case $k=1$}

We will use the following two observations.

\begin{lemma}[see {\cite[Lemma~2.1]{KosZh}}]\label{lem-lorentz-equivalence}
Let $\alpha\in(0,1]$ and let $\nu$ be a probability Borel measure on $\mathbb R^k$.
Then the estimate
\begin{equation}\label{eq-lorentz-1}
\nu(A)\le C\bigl(\lambda_k(A)\bigr)^\alpha
\end{equation}
for every Borel set $A\subset \mathbb R^k$ is equivalent to the estimate
\begin{equation}\label{eq-lorentz-2}
\int_{\mathbb R^k}\psi\,d\nu\le C\|\psi\|_{L^1(\mathbb R^k)}^\alpha
\end{equation}
for every $\psi\in C_0^\infty(\mathbb R^k)$ with $0\le\psi\le 1$.
\end{lemma}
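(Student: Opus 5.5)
The plan is to prove the two implications separately. In both directions we approximate between indicator functions and smooth functions, and the constant $C$ is kept unchanged.

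\emph{From \eqref{eq-lorentz-1} to \eqref{eq-lorentz-2}.} We use the layer-cake representation. Fix $\psi\in C_0^\infty(\mathbb R^k)$ with $0\le\psi\le1$ and write
\[
\int\psi\,d\nu=\int_0^1\nu(\psi>t)\,dt\le C\int_0^1\bigl(\lambda_k(\psi>t)\bigr)^\alpha dt .
\]
Since $\alpha\le1$, the map $s\mapsto s^\alpha$ is concave. Jensen's inequality on $[0,1]$ with the probability measure $dt$ then gives
\[
\int_0^1\bigl(\lambda_k(\psi>t)\bigr)^\alpha dt\le\Bigl(\int_0^1\lambda_k(\psi>t)\,dt\Bigr)^\alpha=\|\psi\|_{L^1}^\alpha .
\]
This is exactly \eqref{eq-lorentz-2}. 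The fact that $\psi$ takes values in $[0,1]$ is what allows the $t$-integral to run over a probability space.

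\emph{From \eqref{eq-lorentz-2} to \eqref{eq-lorentz-1}.} It is enough to treat $\lambda_k(A)<\infty$, since otherwise there is nothing to prove. We first reduce to a compact set. By inner regularity of $\nu$ (it is a finite Borel measure on $\mathbb R^k$), it suffices to show $\nu(K)\le C\lambda_k(A)^\alpha$ for every compact $K\subset A$, and $\lambda_k(K)\le\lambda_k(A)$. For a compact $K$ and any $\varepsilon>0$, outer regularity of Lebesgue measure gives an open set $U\supset K$ with $\lambda_k(U)\le\lambda_k(K)+\varepsilon$. By the smooth Urysohn lemma we pick $\psi\in C_0^\infty$ with $0\le\psi\le1$, $\psi=1$ on $K$ and $\operatorname{supp}\psi\subset U$. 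Then
\[
\nu(K)\le\int\psi\,d\nu\le C\|\psi\|_{L^1}^\alpha\le C(\lambda_k(K)+\varepsilon)^\alpha .
\]
Letting $\varepsilon\to0$ gives $\nu(K)\le C\lambda_k(K)^\alpha\le C\lambda_k(A)^\alpha$, and taking the supremum over compact $K\subset A$ proves \eqref{eq-lorentz-1}.

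The only real point is the Jensen step in the first direction, where concavity uses $\alpha\le1$; everything else is routine regularity. One edge case is $\lambda_k(A)=0$, which the compact approximation argument above handles without change.
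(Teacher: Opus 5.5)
Your proof is correct. The paper gives no proof of this lemma and only cites it from an earlier work, so there is no in-paper argument to compare with. What you wrote is the standard proof of this equivalence, and it keeps the same constant $C$ in both directions: the layer-cake formula with Jensen's inequality for the concave map $s\mapsto s^\alpha$ gives one implication, and inner regularity of $\nu$ with smooth Urysohn cutoffs of compact sets (using outer regularity of $\lambda_k$) gives the other.
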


\begin{lemma}[see {\cite[Corollary~2.3]{KosZh}}]\label{lem-lorentz}
Let $k,d\in\mathbb N$, and let $\varrho$ be a nonnegative integrable function
on $\mathbb R^k$ such that
\[
0\le \varrho(x)\le M
\quad \forall x\in\mathbb R^k.
\]
Let $f\in \mathcal P_d(\mathbb R^k;\mathbb R^k)$. Then, for every
$\varepsilon>0$ and every $\psi\in C_0^\infty(\mathbb R^k)$ with
$0\le \psi\le 1$, one has
\[
\int_{\mathbb R^k}\psi(f(x))\varrho(x)\,dx
\le
\int_{\mathbb R^k}I_{\{|\det J_f|\le \varepsilon\}}\varrho\,dx
+
d^kM\varepsilon^{-1}\|\psi\|_{L^1(\mathbb R^k)}.
\]
\end{lemma}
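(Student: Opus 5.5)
The plan is to use the change-of-variables formula for a polynomial map $f\colon\mathbb R^k\to\mathbb R^k$ together with a bound on the number of preimages. First I reduce to $\psi$ that is only Borel measurable with $0\le\psi\le1$; smooth compactly supported $\psi$ is a special case. I split the integral according to whether $|\det J_f|\le\varepsilon$ or $|\det J_f|>\varepsilon$:
\[
\int_{\mathbb R^k}\psi(f)\varrho\,dx
\le
\int_{\mathbb R^k}I_{\{|\det J_f|\le\varepsilon\}}\varrho\,dx
+
\int_{\{|\det J_f|>\varepsilon\}}\psi(f(x))\varrho(x)\,dx .
\]
In the first term I use only $\psi\le1$. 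For the second term I bound $\varrho\le M$ and $1<\varepsilon^{-1}|\det J_f|$ on the domain of integration. This gives at most
\[
M\varepsilon^{-1}\int_{\{\det J_f\ne0\}}\psi(f(x))|\det J_f(x)|\,dx .
\]

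Next I apply the area formula. It holds for locally Lipschitz maps, in particular polynomial maps, and gives
\[
\int_{\mathbb R^k}\psi(f(x))|\det J_f(x)|\,dx
=
\int_{\mathbb R^k}\psi(y)\,\#\bigl(f^{-1}(y)\cap\{\det J_f\ne0\}\bigr)\,dy .
\]
It then suffices to show that, for almost every $y$, the number of nondegenerate solutions of $f(x)=y$ is at most $d^k$.

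This counting step is the main obstacle. Nondegenerate solutions of $f(x)=y$ are isolated points of the zero set of the system $f_i(x)-y_i=0$, $i=1,\dots,k$. This is a square system of $k$ polynomials of degree at most $d$. By Bézout's theorem, in its refined form counting isolated points of the complex solution set (e.g.\ via Fulton's refined Bézout or Milnor's bound), the number of isolated solutions in $\mathbb C^k$, hence in $\mathbb R^k$, is at most $d^k$. Degenerate components of positive dimension do not affect this count of isolated points. Each nondegenerate real solution is isolated in $\mathbb C^k$ too, since the complex Jacobian there is invertible. So the bound $d^k$ holds for every $y$, not just almost every $y$.

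Combining the three steps gives the stated estimate with constant $d^kM\varepsilon^{-1}$.
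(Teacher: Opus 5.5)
Your argument is correct. You split at $|\det J_f|=\varepsilon$, use $\varrho\le M$ and $1<\varepsilon^{-1}|\det J_f|$ on the complement, apply the area formula, and bound the number of nondegenerate preimages by $d^k$ via the refined B\'ezout theorem; this gives exactly the constant $d^kM\varepsilon^{-1}$. Your remark that a nondegenerate real solution stays isolated in $\mathbb C^k$ is what makes the count legitimate. The only cases you leave unstated are trivial: if some $f_i-y_i$ is constant, then either $\det J_f\equiv0$ or there are no solutions.

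The paper does not prove this lemma itself; it imports it from Corollary~2.3 of \cite{KosZh}. So there is no in-paper argument to compare against, but the $d^k$ constant indicates that your change-of-variables-plus-B\'ezout route is the natural argument behind the cited result.
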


\begin{theorem}\label{th-bounded-lorentz}
Let $M>0$ and let $d,n\in\mathbb N$. Let
$\mu=\mu_1\otimes\cdots\otimes\mu_n$,
where each $\mu_j$ is a probability measure with density $\varrho_j$ satisfying
\[
0\le \varrho_j(x_j)\le M
\quad \forall x_j\in\mathbb R,
\quad \forall j\in\{1,\ldots,n\}.
\]
Then, for every nonconstant $f\in \mathcal{P}_d(\mathbb{R}^n)$
and for every Borel set $A\subset\mathbb R$, one has
\[
[f]_\infty^{1/d}\mu(f\in A)
\le
2d^2(1+M)\bigl(\lambda_1(A)\bigr)^{1/d}.
\]
\end{theorem}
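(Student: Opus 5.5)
The plan is to induct on the degree $m:=d(f)$, with $1\le m\le d$, using Lemma~\ref{lem-lorentz} in a single coordinate direction. First I prove the estimate with exponent $1/m$ in place of $1/d$. By Lemma~\ref{lem-lorentz-equivalence} it suffices to bound $\int\psi(f)\,d\mu$ for $\psi\in C_0^\infty(\mathbb R)$ with $0\le\psi\le1$.

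Afterwards I pass from $1/m$ to $1/d$. Set $x=\lambda_1(A)/[f]_\infty$. If $x\le1$, then $x^{1/m}\le x^{1/d}$. If $x>1$, the bound is trivial because $\mu(f\in A)\le1\le x^{1/d}$.

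\emph{Base case $m=1$.} Write $f=c+\sum_j a_jx_j$ and pick $j$ with $|a_j|=[f]_\infty$. Conditionally on the other coordinates, $f$ has density bounded by $M/[f]_\infty$. By Fubini, $\mu(f\in A)\le M[f]_\infty^{-1}\lambda_1(A)$, which is well within the claimed constant.

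\emph{Inductive step $m\ge2$.} Choose $\mathbf p$ with $|\mathbf p|=m$ and $|a_{\mathbf p}(f)|=[f]_\infty$, and a coordinate $j$ with $p_j\ge1$. Then $g:=\partial_{x_j}f$ is a polynomial of degree exactly $m-1$. Its coefficient at $x^{\mathbf p-e_j}$ equals $p_ja_{\mathbf p}(f)$, so $[g]_\infty\ge[f]_\infty$.
\begin{enumerate}
\item Freeze all coordinates except $x_j$. The function $t\mapsto f$ is then a one-variable polynomial of degree at most $d$ with derivative $g$, and the density $\varrho_j\le M$.
\item Lemma~\ref{lem-lorentz} with $k=1$, integrated over the remaining coordinates via Fubini, gives
\[
\int\psi(f)\,d\mu\le\mu(|g|\le\varepsilon)+dM\varepsilon^{-1}\|\psi\|_{L^1}.
\]
\item The induction hypothesis for $g$ with $A=[-\varepsilon,\varepsilon]$, in the sharp exponent $1/(m-1)$, gives
\[
\mu(|g|\le\varepsilon)\le 2(m-1)^2(1+M)\,(2\varepsilon/[f]_\infty)^{1/(m-1)}.
\]
\item Choose $\varepsilon=\|\psi\|_{L^1}^{(m-1)/m}[f]_\infty^{1/m}$. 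Both terms then become multiples of $(\|\psi\|_{L^1}/[f]_\infty)^{1/m}$.
\end{enumerate}

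The step I expect to be the main obstacle is the constant bookkeeping: ensuring that the sum of the two constants after optimization stays below $2m^2(1+M)$. The available room is $2m^2-2(m-1)^2=4m-2$, which must absorb the extra $dM$ term and the factor $2^{1/(m-1)}\le2$.

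If the naive choice of $\varepsilon$ overshoots, I would first try rescaling $\varepsilon$ by a constant factor. Failing that, I would run the induction with a slightly stronger constant, for instance $2m\,d(1+M)$. The final constant $2d^2(1+M)$ only needs to dominate at $m\le d$, and since $m\le d$ the factor $d$ in $dM$ can be charged against the $d^2$ in the target.
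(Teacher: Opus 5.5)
Your route is the paper's: induction on $m=d(f)$ in the sharp exponent $1/m$, Lemma~\ref{lem-lorentz} in one coordinate $x_j$ with $p_j\ge1$, the induction hypothesis for $\partial_{x_j}f$ on $[-\varepsilon,\varepsilon]$ via $[\partial_{x_j}f]_\infty\ge[f]_\infty$, Lemma~\ref{lem-lorentz-equivalence}, and the passage from $1/m$ to $1/d$. The one real problem is the bookkeeping you flagged, and it does fail as written. The cause is step~2: you invoke Lemma~\ref{lem-lorentz} with degree $d$, which gives the term $dM\varepsilon^{-1}\|\psi\|_{L^1}$.

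Write $L=\|\psi\|_{L^1}$ and $a=[f]_\infty$. Take your inductive constant $2(m-1)^2(1+M)$ at $m=2$. The bound is then $4(1+M)\varepsilon/a+dML/\varepsilon$, whose minimum over $\varepsilon$ is $4\sqrt{(1+M)dM}\,(L/a)^{1/2}$. This exceeds the target $8(1+M)(L/a)^{1/2}$ whenever $dM>4(1+M)$ (e.g.\ $d=5$, $M=10$), so no rescaling of $\varepsilon$ rescues it.

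Your second fallback fails on its own as well. With constant $2md(1+M)$ and your choice $\varepsilon=L^{(m-1)/m}a^{1/m}$, at $m=2$ you get $(4d(1+M)+dM)(L/a)^{1/2}$ against $4d(1+M)(L/a)^{1/2}$. The two fallbacks only work together. With $C_{m-1}=2(m-1)d(1+M)$ and the optimal $\varepsilon$, the minimum is exactly $2md(1+M)^{(m-1)/m}M^{1/m}(L/a)^{1/m}\le 2md(1+M)(L/a)^{1/m}$, and $2md\le 2d^2$.

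The simpler fix, which is what the paper does, is to note that after freezing the other coordinates, $t\mapsto f$ has degree at most $m=d(f)$. Lemma~\ref{lem-lorentz} therefore gives $mM\varepsilon^{-1}L$ instead of $dM\varepsilon^{-1}L$. With this, even your naive $\varepsilon$ closes the induction. Since $2^{x}\le 1+x$ on $[0,1]$, one has $2^{1/(m-1)}(m-1)^2\le m(m-1)$. Hence the two terms sum to at most $\bigl(2m(m-1)(1+M)+mM\bigr)(L/a)^{1/m}\le 2m^2(1+M)(L/a)^{1/m}$.

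The paper carries the slightly sharper constant $2m^2M$ through the induction with an optimized $\varepsilon$, and inserts the factor $1+M$ only in the final step. That difference is cosmetic.
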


\begin{proof}
First we prove that, for every nonconstant polynomial $f$,
\begin{equation}\label{lorentz-1}
[f]_\infty^{1/d(f)}\mu(f\in A)
\le
2d(f)^2M\bigl(\lambda_1(A)\bigr)^{1/d(f)}
\end{equation}
for every Borel set $A\subset\mathbb R$.

Let
\[
f(x)=\sum_{|\mathbf m|\le d(f)}a_{\bf m}(f)x^{\mathbf m}.
\]
We prove \eqref{lorentz-1} by induction on $d(f)$.

First consider the case $d(f)=1$. Choose $j\in\{1,\ldots,n\}$ such that
\[
[f]_\infty=|a_{e_j}(f)|.
\]
For fixed $x_i$, $i\ne j$, using $\varrho_j\le M$ and changing variables in
the $x_j$-integral, we obtain
\[
\mu_j\bigl(x_j\in\mathbb R\colon f(x)\in A\bigr)
\le
M |a_{e_j}(f)|^{-1}\lambda_1(A)
=
M[f]_\infty^{-1}\lambda_1(A).
\]
Integrating with respect to the remaining variables gives
\[
\mu(f\in A)
\le
M[f]_\infty^{-1}\lambda_1(A)
\le
2M[f]_\infty^{-1}\lambda_1(A).
\]
Thus \eqref{lorentz-1} holds when $d(f)=1$.

Assume now that \eqref{lorentz-1} has been proved for all nonconstant
polynomials of degree at most $m-1$, where $m\ge2$, and let
$f\in \mathcal{P}_m(\mathbb R^n)$ satisfy $d(f)=m$. Choose a multi-index
${\bf r}\in\mathbb Z_+^n$ such that
\[
[f]_\infty=|a_{\bf r}(f)|,
\quad
|{\bf r}|=m.
\]
Without loss of generality, we may assume that $r_1\ge1$.

Fix $x_2,\ldots,x_n$. Applying Lemma~\ref{lem-lorentz} in the variable $x_1$
to the one-dimensional polynomial
\[
x_1\mapsto f(x_1,x_2,\ldots,x_n),
\]
we obtain, for every $\varepsilon>0$
and every $\psi\in C_0^\infty(\mathbb R)$ with $0\le\psi\le 1$, that
\[
\int_{\mathbb R}\psi(f(x))\,\mu_1(dx_1)
\le
\int_{\mathbb R}
I_{\{|\partial_{x_1}f(x)|\le \varepsilon\}}\,\mu_1(dx_1)
+
mM\varepsilon^{-1}\|\psi\|_{L^1(\mathbb R)}.
\]
Integrating with respect to $x_2,\ldots,x_n$, we obtain
\[
\int_{\mathbb R^n}\psi(f)\,d\mu
\le
\mu\bigl(|\partial_{x_1}f|\le \varepsilon\bigr)
+
mM\varepsilon^{-1}\|\psi\|_{L^1(\mathbb R)}.
\]

We note that $d(\partial_{x_1}f)=m-1$ and that
\[
[\partial_{x_1}f]_\infty\ge r_1|a_{\bf r}(f)|\ge [f]_\infty.
\]
By the induction hypothesis,
\[
\mu\bigl(|\partial_{x_1}f|\le\varepsilon\bigr)
=
\mu\bigl(\partial_{x_1}f\in[-\varepsilon,\varepsilon]\bigr)
\le
2(m-1)^2M[f]_\infty^{-\frac1{m-1}}
(2\varepsilon)^{\frac1{m-1}}.
\]
Therefore,
\[
\int_{\mathbb R^n}\psi(f)\,d\mu
\le
2(m-1)^2M[f]_\infty^{-\frac1{m-1}}(2\varepsilon)^{\frac1{m-1}}
+
mM\varepsilon^{-1}\|\psi\|_{L^1(\mathbb R)}.
\]
We now take
\[
\varepsilon=
\frac{1}{2}
m^{\frac{m-1}{m}}(m-1)^{-\frac{m-1}{m}}
[f]_\infty^{1/m}
\|\psi\|_{L^1(\mathbb R)}^{\frac{m-1}{m}}.
\]
Substituting this value of $\varepsilon$, we get
\[
\int_{\mathbb R^n}\psi(f)\,d\mu
\le
2m(m-1)M
m^{1/m}(m-1)^{-1/m}
[f]_\infty^{-1/m}
\|\psi\|_{L^1(\mathbb R)}^{1/m}
\le
2m^2M
[f]_\infty^{-1/m}
\|\psi\|_{L^1(\mathbb R)}^{1/m}
\]
for every $\psi\in C_0^\infty(\mathbb R)$ with $0\le\psi\le1$.
Applying Lemma~\ref{lem-lorentz-equivalence} to the image measure
$\mu\circ f^{-1}$ gives
\[
\mu(f\in A)
\le
2m^2M[f]_\infty^{-1/m}\bigl(\lambda_1(A)\bigr)^{1/m}
\]
for every Borel set $A\subset\mathbb R$. Since $m=d(f)$, this proves
\eqref{lorentz-1}.

Finally, let $f\in \mathcal P_d(\mathbb R^n)$ be nonconstant. Then
$1\le d(f)\le d$. If
$[f]_\infty^{-1}\lambda_1(A)\le 1$,
then, by \eqref{lorentz-1},
\[
\mu(f\in A)
\le
2d(f)^2M\bigl([f]_\infty^{-1}\lambda_1(A)\bigr)^{1/d(f)}
\le
2d^2(1+M)\bigl([f]_\infty^{-1}\lambda_1(A)\bigr)^{1/d}.
\]

If
$[f]_\infty^{-1}\lambda_1(A)>1$,
then
\[
\mu(f\in A)
\le
1
\le
\bigl([f]_\infty^{-1}\lambda_1(A)\bigr)^{1/d}
\le
2d^2(1+M)\bigl([f]_\infty^{-1}\lambda_1(A)\bigr)^{1/d}.
\]
Thus, in both cases,
\[
[f]_\infty^{1/d}\mu(f\in A)
\le
2d^2(1+M)\bigl(\lambda_1(A)\bigr)^{1/d},
\]
which completes the proof.
\end{proof}

In particular, we obtain the following counterpart of the classical
Carbery--Wright small-ball estimate \cite{CW01}.

\begin{corollary}\label{cor-bounded-CW}
Let $M>0$ and let $d,n\in\mathbb N$. Let
$\mu=\mu_1\otimes\cdots\otimes\mu_n$,
where each $\mu_j$ is a probability measure with density $\varrho_j$ satisfying
\[
0\le \varrho_j(x_j)\le M
\quad \forall x_j\in\mathbb R,
\quad \forall j\in\{1,\ldots,n\}.
\]
Then, for every $f\in \mathcal{P}_d(\mathbb{R}^n)$
and every $\varepsilon>0$, one has
\[
[f]_\infty^{1/d}\mu(|f|\le \varepsilon)
\le
4d^2(1+M)\varepsilon^{1/d}.
\]
\end{corollary}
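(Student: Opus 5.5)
The corollary should follow directly from Theorem~\ref{th-bounded-lorentz} by taking $A=[-\varepsilon,\varepsilon]$, so that $\lambda_1(A)=2\varepsilon$. The only issue is that Theorem~\ref{th-bounded-lorentz} is stated for nonconstant $f$, whereas the corollary allows every $f\in\mathcal P_d(\mathbb R^n)$. I therefore split into two cases.

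If $f$ is constant (including $f\equiv0$), then $d(f)=0$. In that case $[f]_\infty$ is either $|a_{\bf 0}(f)|$ or, by the convention $[0]_\infty=0$, zero. The problematic situation is a nonzero constant $f=c$. Here $\mu(|f|\le\varepsilon)$ equals $1$ if $|c|\le\varepsilon$ and $0$ otherwise. When it equals $1$, we have $[f]_\infty=|c|\le\varepsilon$, so $[f]_\infty^{1/d}\le\varepsilon^{1/d}\le 4d^2(1+M)\varepsilon^{1/d}$. When it equals $0$, the inequality is trivial. For $f\equiv0$ the left-hand side vanishes, since $[0]_\infty=0$. So constant polynomials satisfy the bound directly.

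If $f$ is nonconstant, Theorem~\ref{th-bounded-lorentz} with $A=[-\varepsilon,\varepsilon]$ gives
\[
[f]_\infty^{1/d}\mu(|f|\le\varepsilon)\le 2d^2(1+M)(2\varepsilon)^{1/d}.
\]
Since $d\ge1$, we have $2^{1/d}\le2$, and the right-hand side is at most $4d^2(1+M)\varepsilon^{1/d}$, which is the claim.

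There is no substantial obstacle here. The only point needing care is the constant case and its normalization $[f]_\infty$: one has to check that the convention used for constants makes the bound hold, which the computation above confirms.
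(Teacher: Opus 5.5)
Your proposal is correct and follows the same route as the paper: apply Theorem~\ref{th-bounded-lorentz} with $A=[-\varepsilon,\varepsilon]$ for nonconstant $f$, and treat constants separately. Your explicit check of the constant case, using $[c]_\infty=|c|$ when $c\neq0$ and $[0]_\infty=0$, fills in what the paper simply calls trivial.
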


\begin{proof}
For nonconstant $f$, this follows from Theorem~\ref{th-bounded-lorentz} applied to
$A=[-\varepsilon,\varepsilon]$. For constant $f$, the estimate is trivial.
\end{proof}

\begin{corollary}\label{cor-GM-type}
Let $M>0$ and let $d,n\in\mathbb N$. Let
$\mu=\mu_1\otimes\cdots\otimes\mu_n$,
where each $\mu_j$ is a probability measure with density $\varrho_j$ satisfying
\[
0\le \varrho_j(x_j)\le M
\quad \forall x_j\in\mathbb R,
\quad \forall j\in\{1,\ldots,n\}.
\]
Then, for every $f\in \mathcal{P}_d(\mathbb{R}^n)$, one has
\[
[f]_\infty
\le
2(8d^2(1+M))^d\|f\|_{L^1(\mu)}.
\]
\end{corollary}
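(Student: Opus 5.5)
The plan is to derive the bound from the small-ball estimate of Corollary~\ref{cor-bounded-CW} by a Markov/Chebyshev-type argument. If $[f]_\infty=0$, that is, if $f$ is the zero polynomial, there is nothing to prove. Otherwise, we bound $\|f\|_{L^1(\mu)}$ from below by $\varepsilon\,\mu(|f|>\varepsilon)$ and choose $\varepsilon$ so that $\mu(|f|\le\varepsilon)\le 1/2$.

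The case of a nonzero constant $f=c$ is handled separately. Here $d(f)=0$ and $[f]_\infty=|c|=\|f\|_{L^1(\mu)}$, so the estimate holds because $2(8d^2(1+M))^d\ge1$.

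For nonconstant $f$, Corollary~\ref{cor-bounded-CW} gives
\[
\mu(|f|\le\varepsilon)\le 4d^2(1+M)\bigl(\varepsilon/[f]_\infty\bigr)^{1/d}.
\]
We take
\[
\varepsilon:=[f]_\infty\bigl(8d^2(1+M)\bigr)^{-d},
\]
which makes the right-hand side equal to $1/2$. Hence $\mu(|f|>\varepsilon)\ge 1/2$, and therefore
\[
\|f\|_{L^1(\mu)}\ge \varepsilon\,\mu(|f|>\varepsilon)\ge \tfrac12[f]_\infty\bigl(8d^2(1+M)\bigr)^{-d}.
\]
Rearranging gives exactly the stated constant $2(8d^2(1+M))^d$.

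There is no real obstacle in this argument. The only point requiring care is that Corollary~\ref{cor-bounded-CW} is stated with exponent $1/d$, not $1/d(f)$. This is consistent with the choice above, since $d$ is the common degree bound and the corollary has already absorbed the passage from $d(f)$ to $d$.
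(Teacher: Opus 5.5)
Your proof is correct and is essentially the paper's argument: both combine the small-ball bound of Corollary~\ref{cor-bounded-CW} with Markov's inequality and arrive at the same constant $2(8d^2(1+M))^d$. The only difference is bookkeeping. The paper sets $\varepsilon=2\|f\|_{L^1(\mu)}$ and uses Markov to get $\mu(|f|\le\varepsilon)\ge\frac12$, whereas you choose $\varepsilon$ in terms of $[f]_\infty$ so that the small-ball bound equals $\frac12$ and then use $\|f\|_{L^1(\mu)}\ge\varepsilon\,\mu(|f|>\varepsilon)$; this incidentally avoids needing $\|f\|_{L^1(\mu)}$ to be finite and positive.
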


\begin{proof}
The assertion is trivial for $f=0$. Suppose that $f\ne0$. By
Corollary~\ref{cor-bounded-CW}, we have
\[
[f]_\infty^{1/d}\mu(|f|\le \varepsilon)
\le
4d^2(1+M)\varepsilon^{1/d}
\quad \forall \varepsilon>0.
\]
By Markov's inequality,
\[
\mu\bigl(|f|\le 2\|f\|_{L^1(\mu)}\bigr)
=
1-\mu\bigl(|f|>2\|f\|_{L^1(\mu)}\bigr)
\ge
\frac12.
\]
Taking $\varepsilon=2\|f\|_{L^1(\mu)}$, we obtain
\[
[f]_\infty
\le
2(8d^2(1+M))^d\|f\|_{L^1(\mu)},
\]
which completes the proof.
\end{proof}

\subsection{Proof of Theorem~\ref{th-lorentz}}

We fix
\[
{\bf r}:=(r_1,\ldots,r_k)
\quad\text{with}\quad
1\le r_1<\ldots<r_k\le n.
\]
We write
\[
\mu_{\bf r}:=\mu_{r_1}\otimes\ldots\otimes\mu_{r_k}
\]
and
\[
x_{\bf r}:=(x_{r_1},\ldots,x_{r_k}).
\]
Let $x_{\hat{\bf r}}$ denote the remaining variables.

For every $\varepsilon>0$, every
$\psi\in C_0^\infty(\mathbb R^k)$ with $0\le \psi\le 1$, and every fixed
$x_{\hat{\bf r}}\in\mathbb R^{n-k}$, Lemma~\ref{lem-lorentz} gives
\[
\int_{\mathbb R^k}\psi(f(x_{\bf r},x_{\hat{\bf r}}))\,\mu_{\bf r}(dx_{\bf r})
\le
\int_{\mathbb R^k}I_{\{|\det J_f^{\bf r}|\le \varepsilon\}}\,\mu_{\bf r}(dx_{\bf r})
+
d^kM^k\varepsilon^{-1}\|\psi\|_{L^1(\mathbb R^k)}.
\]
Integrating with respect to the remaining variables gives
\begin{equation}\label{eq-lorentz-pre-CW}
\int_{\mathbb R^n}\psi(f)\,d\mu
\le
\mu\bigl(|\det J_f^{\bf r}|\le \varepsilon\bigr)
+
d^kM^k\varepsilon^{-1}\|\psi\|_{L^1(\mathbb R^k)}.
\end{equation}

Assume that 
$[\det J_f^{\bf r}]_\infty>0$.
Since $\det J_f^{\bf r}\in\mathcal P_{k(d-1)}(\mathbb R^n)$, Corollary~\ref{cor-bounded-CW}
gives
\[
\mu\bigl(|\det J_f^{\bf r}|\le \varepsilon\bigr)
\le
4k^2d^2(1+M)
[\det J_f^{\bf r}]_\infty^{-\frac{1}{k(d-1)}}
\varepsilon^{\frac{1}{k(d-1)}}.
\]
Combining this with \eqref{eq-lorentz-pre-CW} and taking
\[
\varepsilon
=
\biggl(
\frac{
d^kM^k\|\psi\|_{L^1(\mathbb R^k)}
[\det J_f^{\bf r}]_\infty^{\frac{1}{k(d-1)}}}
{k^2d^2(1+M)}
\biggr)^{\frac{k(d-1)}{k(d-1)+1}},
\]
we obtain
\[
\int_{\mathbb R^n}\psi(f)\,d\mu
\le
5(1+M)^{\frac{k(d-1)}{k(d-1)+1}}
(d^kM^k)^{\frac{1}{k(d-1)+1}}
(kd)^{\frac{2k(d-1)}{k(d-1)+1}}
[\det J_f^{\bf r}]_\infty^{-\frac{1}{k(d-1)+1}}
\|\psi\|_{L^1(\mathbb R^k)}^{\frac{1}{k(d-1)+1}}.
\]
Since $d\ge2$,
\[
\frac{1}{k(d-1)+1}\le \frac{2}{kd}.
\]
Hence
\[
[\det J_f^{\bf r}]_\infty^{\frac{1}{k(d-1)+1}}
\int_{\mathbb R^n}\psi(f)\,d\mu
\le
20(1+M)^2k^2d^2
\|\psi\|_{L^1(\mathbb R^k)}^{\frac{1}{k(d-1)+1}}.
\]

Choosing ${\bf r}$ so that
\[
[\det J_f^{\bf r}]_\infty
=
\max_{{\bf r}\in {\bf N}_n^k}[\det J_f^{\bf r}]_\infty
\]
and using
\[
\max_{{\bf r}\in {\bf N}_n^k}[\det J_f^{\bf r}]_\infty\ge a,
\]
we obtain
\[
\int_{\mathbb R^n}\psi(f)\,d\mu
\le
20k^2d^2(1+M)^2
a^{-\frac{1}{k(d-1)+1}}
\|\psi\|_{L^1(\mathbb R^k)}^{\frac{1}{k(d-1)+1}}.
\]

Lemma~\ref{lem-lorentz-equivalence} now gives the announced estimates.
\qed

\section{Proofs of the main results}
\label{sect-main}

\subsection{Comparison of coefficient normalizations}

\begin{lemma}\label{lem-coeff}
Let $M,R>0$ and $d\in\mathbb N$. Let
$\mu=\bigotimes_{j=1}^\infty\mu_j$,
where $\mu_j\in\mathcal M_d(M,R)$ for every $j\in\mathbb N$.
Then there exists a constant $C=C(d)>0$ such that, for every
$f=(f_1,f_2)$, where $f_1$ and $f_2$ are cylindrical polynomials of degrees
at most $d$, one has
\[
(1+R)^{-2d}[[f]]_{\mu,\infty}
\le
[[f]]_\infty
\le
C(d)(1+M)^{2d}[[f]]_{\mu,\infty}.
\]
\end{lemma}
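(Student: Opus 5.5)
The plan is to compare the two coefficient systems directly through the leading coefficients of the one-dimensional orthonormal polynomials. For each $j$ and $1\le m\le d$, write
\[
u_{j,m}(t)=c_{j,m}t^m+(\text{terms of degree}<m),\qquad c_{j,m}\ne0 .
\]
These polynomials exist up to degree $d$ because $\mu_j$ has a density and hence infinite support. Set $c_{j,0}=1$. For a multi-index $\mathbf m$ put $c_{\mathbf m}:=\prod_j c_{j,m_j}$. Since $f$ is cylindrical, only finitely many coordinates occur, and all sums below are finite.

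\emph{Step 1 (leading parts).} Expanding the product $U_{\mathbf p}=\prod_j\bigl(c_{j,p_j}x_j^{p_j}+\dots\bigr)$ shows that $U_{\mathbf p}=c_{\mathbf p}x^{\mathbf p}+(\text{terms of total degree}<|\mathbf p|)$. Consequently, if $D:=d_\mu(f)$, every $U_{\mathbf p}$ with $|\mathbf p|<D$ contributes only monomials of degree $<D$. Every $U_{\mathbf p}$ with $|\mathbf p|=D$ contributes exactly $c_{\mathbf p}x^{\mathbf p}$ in degree $D$. Hence $d(f)=d_\mu(f)$, the constant case being trivial on both sides, and
\[
a_{\mathbf m}(f_i)=c_{\mathbf m}\,a^\mu_{\mathbf m}(f_i)\quad(|\mathbf m|=D),\qquad\text{so}\qquad A_{\mathbf p,\mathbf q}(f)=c_{\mathbf p}c_{\mathbf q}\,A^\mu_{\mathbf p,\mathbf q}(f).
\]
Therefore it suffices to show
\[
(1+R)^{-D}\le|c_{\mathbf m}|\le C(d)(1+M)^{D}\qquad\text{for all }|\mathbf m|=D\le d .
\]
Taking products for the pair $\mathbf p,\mathbf q$ and maximizing over $\mathbf p,\mathbf q$ then gives both inequalities. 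Here we use $(1+R)^{-2D}\ge(1+R)^{-2d}$ and $(1+M)^{2D}\le(1+M)^{2d}$.

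\emph{Step 2 (lower bound).} The quantity $1/|c_{j,m}|$ equals the $L^2(\mu_j)$-distance from $t^m$ to the polynomials of degree $<m$. This is because $u_{j,m}/c_{j,m}=t^m-\pi(t^m)$, where $\pi$ is the orthogonal projection onto those polynomials. This distance is at most $\|t^m\|_{L^2(\mu_j)}$. By Hölder's inequality and the moment assumption, $\|t^m\|_{L^2(\mu_j)}\le\|t\|_{L^{2d}(\mu_j)}^m\le R^m\le(1+R)^m$. Multiplying over $j$ with $\sum m_j=D$ gives $|c_{\mathbf m}|\ge(1+R)^{-D}$.

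\emph{Step 3 (upper bound).} Apply Corollary~\ref{cor-GM-type} with $n=1$ to the polynomial $u_{j,m}$ of degree $m$, for which $[u_{j,m}]_\infty=|c_{j,m}|$. This gives
\[
|c_{j,m}|\le 2(8m^2(1+M))^m\|u_{j,m}\|_{L^1(\mu_j)}\le 2(8d^2(1+M))^m ,
\]
since $\|u_{j,m}\|_{L^1}\le\|u_{j,m}\|_{L^2}=1$. At most $D\le d$ indices $j$ have $m_j\ge1$, so the product over $j$ is at most $2^d(8d^2)^d(1+M)^D$. Thus the constant depends only on $d$.

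The only delicate point is Step 1: one must check that lower-order terms of the $U_{\mathbf p}$ never reach total degree $D$, so that the top-degree monomial coefficients are obtained from the orthogonal coefficients by a \emph{diagonal} rescaling. This follows from the total-degree count above. Everything else reduces to the two one-variable bounds on $c_{j,m}$, which come directly from the moment bound and from Corollary~\ref{cor-GM-type}.
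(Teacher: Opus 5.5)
Your proposal is correct and follows essentially the same argument as the paper. Both use triangularity of the basis change to get the diagonal rescaling $A_{\mathbf p,\mathbf q}(f)=c_{\mathbf p}c_{\mathbf q}A^\mu_{\mathbf p,\mathbf q}(f)$, then bound the leading coefficients from below by orthogonality plus the $2d$-moment bound, and from above by Corollary~\ref{cor-GM-type} with $n=1$. Your bookkeeping of the product constant, counting at most $D\le d$ nontrivial factors, is actually a bit more careful than the paper's intermediate bound.
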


\begin{proof}
Recall that, for every $j\in\mathbb N$,
$u_{j,0}=1,u_{j,1},\ldots,u_{j,d}$ are the orthonormal polynomials in
$L^2(\mu_j)$. Write
\[
u_{j,m}(t)=\sum_{k=0}^m\alpha_{j,k}^m t^k.
\]
We first estimate $\alpha_{j,m}^m$ for $0\le m\le d$. Clearly,
$\alpha_{j,0}^0=1$. For $m\ge1$, since $u_{j,m}$ is orthogonal in
$L^2(\mu_j)$ to
\[
\sum_{k=0}^{m-1}\alpha_{j,k}^m t^k,
\]
we obtain
\[
1=\|u_{j,m}\|_{L^2(\mu_j)}
\le
\|\alpha_{j,m}^m t^m\|_{L^2(\mu_j)}
\le
|\alpha_{j,m}^m|R^m.
\]
Hence
\begin{equation}\label{eq-upper}
|\alpha_{j,m}^m|
\ge
R^{-m}
\ge
(1+R)^{-m}.
\end{equation}
On the other hand, applying Corollary~\ref{cor-GM-type} to $u_{j,m}$ and
using $\|u_{j,m}\|_{L^1(\mu_j)}\le1$, we obtain
\begin{equation}\label{eq-lower}
|\alpha_{j,m}^m|
\le
2\bigl(8m^2(1+M)\bigr)^m
\le
(16d^2)^d(1+M)^m.
\end{equation}

Now let $f=(f_1,f_2)$, where $f_1$ and $f_2$ depend only on the first $n$
coordinates. Write
\[
f_i(x)
=
\sum_{|\mathbf m|\le d(f)}a_{\bf m}(f_i)x^{\mathbf m}
=
\sum_{|\mathbf m|\le d_\mu(f)}a_{\bf m}^\mu(f_i)U_{\mathbf m}(x),
\quad
i\in\{1, 2\},
\]
where
\[
U_{\mathbf m}(x)
=
\prod_{j=1}^n u_{j,m_j}(x_j).
\]
Since $U_{\mathbf m}$ has degree $|\mathbf m|$ and leading monomial
coefficient
\[
\prod_{j=1}^n\alpha_{j,m_j}^{m_j},
\]
the triangularity of the change of basis gives $d(f)=d_\mu(f)$ and
\[
a_{\mathbf m}(f_i)
=
\biggl(\prod_{j=1}^n\alpha_{j,m_j}^{m_j}\biggr)
a_{\bf m}^\mu(f_i)
\]
for every $|\mathbf m|=d(f)$ and $i\in\{1,2\}$. Consequently, for every
$\mathbf p,\mathbf q$ with
$|\mathbf p|=|\mathbf q|=d(f)$, we have
\[
A_{\mathbf p,\mathbf q}(f)
=
\biggl(\prod_{j=1}^n\alpha_{j,p_j}^{p_j}\biggr)
\biggl(\prod_{j=1}^n\alpha_{j,q_j}^{q_j}\biggr)
A^\mu_{\mathbf p,\mathbf q}(f).
\]
By \eqref{eq-upper} and \eqref{eq-lower},
\[
(1+R)^{-2d}
\le
\biggl(\prod_{j=1}^n|\alpha_{j,p_j}^{p_j}|\biggr)
\biggl(\prod_{j=1}^n|\alpha_{j,q_j}^{q_j}|\biggr)
\le
(16d^2)^{2d}(1+M)^{2d}.
\]
Taking the maximum over $\mathbf p$ and $\mathbf q$ proves the claim.
\end{proof}

\subsection{Proof of Theorem~\ref{th-inf-lorentz}}

Let
\[
f_1(x)=\sum_{|\mathbf m|\le d}a_{\bf m}^\mu(f_1)U_{\mathbf m}(x),
\qquad
f_2(x)=\sum_{|\mathbf m|\le d}a_{\bf m}^\mu(f_2)U_{\mathbf m}(x).
\]	
Without loss of generality, we may assume that
$[[f]]_{\mu,\infty}>0$.
By the definition of $[[f]]_{\mu,\infty}$, there exist
multi-indices $\mathbf p,\mathbf q$ such that
$|\mathbf p|=|\mathbf q|=d_\mu(f)$
and
\[
|A^\mu_{\mathbf p,\mathbf q}(f)|\ge \frac{1}{2}\, [[f]]_{\mu,\infty}.
\]
Choose $N_0\in\mathbb N$ such that
\[
\operatorname{supp}\mathbf p\cup\operatorname{supp}\mathbf q
\subset\{1,\ldots,N_0\}.
\]
For $N\ge N_0$, define
\begin{equation}\label{eq-trunc}
f_1^{(N)}(x)
:=
\sum_{\substack{|\mathbf m|\le d\\
\operatorname{supp}\mathbf m\subset\{1,\ldots,N\}}}
a_{\bf m}^\mu(f_1)U_{\mathbf m}(x),
\quad
f_2^{(N)}(x)
:=
\sum_{\substack{|\mathbf m|\le d\\
\operatorname{supp}\mathbf m\subset\{1,\ldots,N\}}}
a_{\bf m}^\mu(f_2)U_{\mathbf m}(x)
\end{equation}
and set
$f^{(N)}=(f_1^{(N)},f_2^{(N)})$.
Then
\[
f_1^{(N)}\to f_1
\quad\text{and}\quad
f_2^{(N)}\to f_2
\quad\text{in }L^2(\mu).
\]

For every $N\ge N_0$, 
by Lemma~\ref{lem-coeff},
\[
[[f^{(N)}]]_\infty
\ge
(1+R)^{-2d}[[f^{(N)}]]_{\mu, \infty}
\ge 
(1+R)^{-2d}|A^\mu_{\mathbf p,\mathbf q}(f)|
\ge
(1+R)^{-2d}\, \frac{1}{2}\, [[f]]_{\mu,\infty}.
\]
By Theorem~\ref{th-algebr},
\[
\max_{{\bf r}\in{\bf N}_N^2}
[\det J_{f^{(N)}}^{\bf r}]_\infty
\ge
C_1(d)(1+R)^{-2d}[[f]]_{\mu,\infty}.
\]
Applying Theorem~\ref{th-lorentz} with $k=2$ and
Lemma~\ref{lem-lorentz-equivalence}, we obtain, for every
$\psi\in C_0^\infty(\mathbb R^2)$ with $0\le\psi\le1$,
\begin{align*}
\int_{\mathbb R^\infty}\psi(f^{(N)})\,d\mu
&=
\int_{\mathbb R^N}\psi\bigl(f^{(N)}(x_1, \ldots, x_N)\bigr)\,\mu_1(dx_1)\ldots\mu_N(dx_N)
\\
&\le
80d^2(1+M)^2
\bigl(C_1(d)(1+R)^{-2d}[[f]]_{\mu,\infty}\bigr)^{-\frac1{2d-1}}
\|\psi\|_{L^1(\mathbb R^2)}^{\frac1{2d-1}}.
\end{align*}
Passing to the limit, we obtain
\[
\int_{\mathbb R^\infty}\psi(f)\,d\mu
\le
C_2(d)(1+M)^2
(1+R)^{\frac{2d}{2d-1}}[[f]]_{\mu,\infty}^{-\frac1{2d-1}}
\|\psi\|_{L^1(\mathbb R^2)}^{\frac1{2d-1}},
\]
where
\[
C_2(d)=80d^2
\bigl(C_1(d)\bigr)^{-\frac1{2d-1}}.
\]
Applying Lemma~\ref{lem-lorentz-equivalence} to the image measure
$\mu\circ f^{-1}$ gives the claimed estimate.
\qed

\subsection{Markov--Bernstein-type inequality}

\begin{lemma}\label{lem-MB}
Let $M,R>0$ and let $d\in\mathbb N$.
There exists a constant $C(M,R,d)>0$ such that, for every
$\mu=\bigotimes_{j=1}^{\infty}\mu_j$
with $\mu_j\in\mathcal M_d(M,R)$ and every 
cylindrical polynomial $f$ of degree at most $d$,
one has
\[
\|\nabla f\|_{L^2(\mu)}
\le C(M,R,d)\|f\|_{L^2(\mu)}.
\]
\end{lemma}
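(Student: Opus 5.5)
We expand $f$ in the orthonormal basis, $f=\sum_{|\mathbf m|\le d}a^\mu_{\mathbf m}(f)U_{\mathbf m}$. Since $f$ is cylindrical and depends on $x_1,\dots,x_n$, only finitely many terms appear. By orthonormality, $\|f\|_{L^2(\mu)}^2=\sum_{\mathbf m}|a^\mu_{\mathbf m}(f)|^2$. We then compute each partial derivative separately:
\[
\partial_{x_j}f=\sum_{\mathbf m}a^\mu_{\mathbf m}(f)\,u_{j,m_j}'(x_j)\prod_{i\ne j}u_{i,m_i}(x_i).
\]
The function $u_{j,m}'$ is a polynomial of degree $m-1$ in $x_j$, so it expands as $u_{j,m}'=\sum_{l<m}\beta^{j}_{m,l}u_{j,l}$ with $\beta^j_{m,l}=\int u_{j,m}'u_{j,l}\,d\mu_j$. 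Grouping terms by the multi-index in which the $j$th entry $m_j$ is replaced by $l$, the factors for $i\ne j$ stay orthonormal. Hence
\[
\|\partial_{x_j}f\|_{L^2(\mu)}^2=\sum_{\mathbf m'}\Bigl|\sum_{m>m'_j}\beta^j_{m,m'_j}a^\mu_{\mathbf m'+(m-m'_j)e_j}(f)\Bigr|^2\le B^2 d\sum_{\mathbf m\colon m_j\ge1}|a^\mu_{\mathbf m}(f)|^2 ,
\]
where $B$ bounds all $|\beta^j_{m,l}|$. This follows from Cauchy--Schwarz over the at most $d$ values of $m$, together with the observation that each $\mathbf m$ is hit at most $d$ times; so the constant becomes $B^2d^2$ at worst.

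Summing over $j$, each $\mathbf m$ with $|\mathbf m|\le d$ has at most $d$ indices $j$ with $m_j\ge1$. This gives $\|\nabla f\|^2_{L^2(\mu)}\le B^2d^3\|f\|^2_{L^2(\mu)}$, a bound independent of $n$. The dimension-free structure is therefore automatic from the product structure. The remaining issue is a one-dimensional bound.

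The main step is to bound $|\beta^j_{m,l}|\le\|u_{j,m}'\|_{L^2(\mu_j)}$ uniformly over $\mu_j\in\mathcal M_d(M,R)$. This is a one-dimensional Markov-type inequality $\|p'\|_{L^2(\nu)}\le C(M,R,d)\|p\|_{L^2(\nu)}$ for polynomials $p$ of degree at most $d$. I would prove it via coefficients. Write $p=\sum_k c_kt^k$.

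For the upper bound on the derivative, the moment assumption gives $\int|t|^{2k}d\nu\le (1+R)^{2d}$ for $k\le d$ (by Hölder). Hence $\|p'\|_{L^2(\nu)}\le d^2(1+R)^d\max_k|c_k|$.

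For the matching lower bound, I need $\max_k|c_k|\le C(M,d)\|p\|_{L^1(\nu)}$. Corollary~\ref{cor-GM-type} controls only the leading coefficient, so I would strengthen it: with $\varepsilon=2\|p\|_{L^1}$, Markov gives $\nu(|p|\le\varepsilon)\ge1/2$. Since the density is at most $M$, the set $\{|p|\le\varepsilon\}$ has Lebesgue measure at least $1/(2M)$. It also lies in $[-L,L]$, where $L$ satisfies $\nu(|t|>L)\le 1/4$; by Chebyshev, $L=4^{1/(2d)}R$ suffices. Then $|p|\le\varepsilon$ on a subset of $[-L,L]$ of measure at least $1/(4M)$. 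By Remez-type (Lagrange interpolation) arguments, this bounds all coefficients by $C(M,R,d)\varepsilon$.

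Alternatively, and more simply, I could use compactness. The set of admissible $\nu$ is tight thanks to the moment bound, and the bounded-density condition survives weak limits. Together these make $\nu\mapsto\inf_{\max|c_k|=1}\|p\|_{L^2(\nu)}$ bounded below, because no admissible $\nu$ can annihilate a nonzero polynomial (it is absolutely continuous). This Remez/compactness step, which gives a uniform equivalence of coefficient norms and $L^2(\nu)$ norms, is the main obstacle. The rest is bookkeeping.
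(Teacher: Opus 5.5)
Your argument is correct. Your tensorization step is essentially the paper's: the paper writes $f=\sum_{\hat{\mathbf m}_j}f_{\hat{\mathbf m}_j}(x_j)U_{\hat{\mathbf m}_j}(\hat x_j)$ and applies the one-dimensional inequality to each slice polynomial $f_{\hat{\mathbf m}_j}-a_{(\hat{\mathbf m}_j,0)}(f)$. That gives $\|\partial_{x_j}f\|_{L^2(\mu)}^2\le C^2\sum_{m_j>0}|a_{\mathbf m}(f)|^2$ directly. Your entrywise bound on $(\beta^j_{m,l})$ plus Cauchy--Schwarz loses a harmless factor $d^2$; your display should read $B^2d^2$, as you note yourself.

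The real difference is the one-dimensional Markov-type inequality. The paper needs neither Remez nor compactness. It applies Corollary~\ref{cor-GM-type} to the truncations $g_r=\sum_{k\le r}a_kt^k$, whose leading coefficient is $a_r$, to get $|a_r|\le C(M,d)\|g_r\|_{L^2(\mu_j)}$. It then peels off the top term with the moment bound, $\|g_{r-1}\|_{L^2(\mu_j)}\le\|g_r\|_{L^2(\mu_j)}+|a_r|R^r\le C(M,R,d)\|g_r\|_{L^2(\mu_j)}$, and iterates downward. So the limitation you point out (the corollary controls only the leading coefficient) is removed by this downward induction, using only tools already in the paper.

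Your Remez route is self-contained classical approximation theory and also gives explicit constants. After rescaling $[-L,L]$ to $[-1,1]$, the factors $L^{-k}$ are controlled because $2L\ge\lambda_1(E)\ge 1/(4M)$. The cost is importing Remez's inequality. The compactness route also works, but it gives no explicit constant, and you would need to write out the lower semicontinuity of $\nu\mapsto\min_{\max_k|c_k|=1}\int|p|^2\,d\nu$ under weak convergence.

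One wording fix: $\{|p|\le\varepsilon\}$ need not lie in $[-L,L]$. What you use is that its intersection with $[-L,L]$ has $\nu$-mass at least $1/4$, hence Lebesgue measure at least $1/(4M)$.
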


\begin{proof}
We start with a one-dimensional estimate.
Fix $j\in\mathbb N$ and let
\[
g(t)=\sum_{k=0}^m a_kt^k,
\quad m\le d.
\]
Set
\[
g_r(t)=\sum_{k=0}^r a_kt^k,
\quad r\in\{0,\ldots,m\}.
\]
For every $r\in\{1,\ldots,m\}$, Corollary~\ref{cor-GM-type} gives
\[
|a_r|
\le
2(8r^2(1+M))^r\|g_r\|_{L^2(\mu_j)}
\le
C_1(M, d)\|g_r\|_{L^2(\mu_j)}.
\]
By this estimate and the moment assumption,
\[
\|g_{r-1}\|_{L^2(\mu_j)}
\le
\|g_r\|_{L^2(\mu_j)}
+
|a_r|\|t^r\|_{L^2(\mu_j)}
\le
C_2(M,R,d)\|g_r\|_{L^2(\mu_j)}.
\]
Iterating from $r=m$ down to $r=1$ gives
\[
\|g_r\|_{L^2(\mu_j)}
\le
C_2(M,R,d)^d\|g\|_{L^2(\mu_j)}
\quad \forall r\in\{0,\ldots,m\}.
\]
Consequently,
\[
\sum_{k=0}^m|a_k|
\le
C_3(M,R,d)\|g\|_{L^2(\mu_j)}.
\]
Using the moment assumption again, we obtain
\begin{equation}\label{eq-derivative-est}
\|g'\|_{L^2(\mu_j)}
\le
\sum_{k=1}^m k|a_k|\|t^{k-1}\|_{L^2(\mu_j)}
\le
C_4(M,R,d)\|g\|_{L^2(\mu_j)}.
\end{equation}

Recall that, for every $j\in\mathbb N$,
$u_{j,0}=1,u_{j,1},\ldots,u_{j,d}$
are the orthonormal polynomials in $L^2(\mu_j)$, and
\[
U_{\mathbf m}(x)
=
\prod_j u_{j,m_j}(x_j).
\]
Let now $f$ be a cylindrical polynomial depending only on the first $n$
coordinates and write
\[
f=
\sum_{\substack{|\mathbf m|\le d\\
\operatorname{supp}{\bf m}\subset\{1,\ldots,n\}}}
a_{\mathbf m}(f)U_{\mathbf m}.
\]

Fix $j\in\{1,\ldots,n\}$. We write
\[
f(x)
=
\sum_{|\hat{{\bf m}}_j|\le d}
f_{\hat{{\bf m}}_j}(x_j)
U_{\hat{{\bf m}}_j}(\hat{x}_j),
\]
where
\[
f_{\hat{{\bf m}}_j}(x_j)
:=
\sum_{m_j=0}^{d-|\hat{{\bf m}}_j|}
a_{(\hat{{\bf m}}_j,m_j)}(f)u_{j,m_j}(x_j),
\quad
\text{and}
\quad
U_{\hat{{\bf m}}_j}(\hat{x}_j)
:=
\prod_{k\ne j}u_{k,m_k}(x_k).
\]
Here
\[
\hat{x}_j
:=
(x_1,\ldots,x_{j-1},x_{j+1},\ldots,x_n),
\quad
\hat{{\bf m}}_j
:=
(m_1,\ldots,m_{j-1},m_{j+1},\ldots,m_n),
\]
and
\[
(\hat{{\bf m}}_j,m_j)
:=
(m_1,\ldots,m_{j-1},m_j,m_{j+1},\ldots,m_n).
\]

Applying \eqref{eq-derivative-est} to
$f_{\hat{{\bf m}}_j}-a_{(\hat{{\bf m}}_j,0)}(f)$
and using orthogonality, we obtain
\[
\|\partial_{x_j}f\|_{L^2(\mu)}^2
=
\sum_{|\hat{{\bf m}}_j|\le d}
\|f_{\hat{{\bf m}}_j}'\|_{L^2(\mu_j)}^2
\le
C_4(M,R,d)^2
\sum_{\mathbf m:\,m_j>0}|a_{\mathbf m}(f)|^2.
\]
Therefore,
\begin{align*}
\|\nabla f\|_{L^2(\mu)}^2
=
\sum_{j=1}^n\|\partial_{x_j}f\|_{L^2(\mu)}^2
&\le
C_4(M,R,d)^2
\sum_{|\mathbf m|\le d}
\#\{j\colon m_j>0\}|a_{\mathbf m}(f)|^2
\\
&\le
dC_4(M,R,d)^2
\sum_{|\mathbf m|\le d}|a_{\mathbf m}(f)|^2
=
dC_4(M,R,d)^2\|f\|_{L^2(\mu)}^2,
\end{align*}
which completes the proof.
\end{proof}

\begin{corollary}\label{cor-grad}
Let $M,R>0$ and $d\in\mathbb N$. Let
$\mu=\bigotimes_{j=1}^\infty\mu_j$,
where $\mu_j\in\mathcal M_d(M,R)$ for every $j\in\mathbb N$.
Then, for every $f\in\mathcal P_d(\mu)$ and every sequence of cylindrical
polynomials $\{f_n\}_{n=1}^\infty$ of degree at most $d$ such that
$f_n\to f$ in $L^2(\mu)$, the sequence $\{\nabla f_n\}_{n=1}^\infty$
converges in $L^2(\mu;\ell^2)$, and its limit does not depend on the choice
of $\{f_n\}_{n=1}^\infty$. Consequently, the gradient extends uniquely to a
bounded linear operator
$\nabla\colon\mathcal P_d(\mu)\to L^2(\mu;\ell^2)$.
\end{corollary}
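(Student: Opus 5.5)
The plan is to use Lemma~\ref{lem-MB}, which makes the gradient a bounded linear map on cylindrical polynomials of degree at most $d$, and then run the standard extension-by-density argument.

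First, for two cylindrical polynomials $g,h$ of degree at most $d$, the difference $g-h$ is again such a polynomial. Since the gradient is linear, Lemma~\ref{lem-MB} gives
\[
\|\nabla g-\nabla h\|_{L^2(\mu)}\le C(M,R,d)\|g-h\|_{L^2(\mu)}.
\]
Here $\nabla g$ takes values in $\ell^2$, because only finitely many partial derivatives are nonzero. The norm on the left is the $L^2(\mu;\ell^2)$ norm, in the notation of the paper.

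Now let $f_n\to f$ in $L^2(\mu)$. Then $\{f_n\}$ is Cauchy in $L^2(\mu)$. By the inequality above, $\{\nabla f_n\}$ is Cauchy in $L^2(\mu;\ell^2)$, which is a Hilbert space and hence complete, so the sequence converges. For independence of the choice, take another approximating sequence $\{g_n\}$. The same inequality gives
\[
\|\nabla f_n-\nabla g_n\|_{L^2(\mu)}\le C\|f_n-g_n\|_{L^2(\mu)}\to0,
\]
so both limits coincide.

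Define $\nabla f$ to be this common limit. Linearity passes to the limit. Boundedness follows from $\|\nabla f\|_{L^2(\mu)}=\lim\|\nabla f_n\|_{L^2(\mu)}\le C\lim\|f_n\|_{L^2(\mu)}=C\|f\|_{L^2(\mu)}$. Uniqueness holds because any bounded linear extension must agree with $\nabla$ on the dense subspace of cylindrical polynomials of degree at most $d$. This subspace is dense in $\mathcal P_d(\mu)$ by the definition of $\mathcal P_d(\mu)$ as its closure.

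There is no real obstacle. All the analytic content is in Lemma~\ref{lem-MB}, whose constant is uniform in the number of coordinates, and that uniformity is exactly what allows the infinite-dimensional extension.
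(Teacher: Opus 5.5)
Your argument is correct and is exactly the standard density argument the paper intends: the paper states this result as an immediate consequence of Lemma~\ref{lem-MB} and gives no separate proof. Transferring the Cauchy property via $\|\nabla(g-h)\|_{L^2(\mu)}\le C\|g-h\|_{L^2(\mu)}$, comparing two approximating sequences, and getting boundedness by passing to the limit is precisely the implicit reasoning. Lemma~\ref{lem-MB} applied to $g-h$ also shows that $\nabla$ is well defined on $\mu$-equivalence classes of cylindrical polynomials, so that point needs no separate check.
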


\subsection{Proof of Theorem~\ref{th-det-est}}

By Corollary~\ref{cor-grad}, the gradient is well defined on
$\mathcal P_d(\mu)$.

First, let $f=(f_1,f_2)$, where $f_1$ and $f_2$ are cylindrical polynomials
of degree at most $d$ depending only on the first $n$ coordinates. 
If $f$ is constant, the conclusion is immediate. Otherwise, by
Theorem~\ref{th-algebr}, there exists ${\bf r}_0\in{\bf N}_n^2$ such that
\[
[\det J_f^{{\bf r}_0}]_\infty
\ge
C(d)[[f]]_\infty.
\]
By Lemma~\ref{lem-coeff},
\[
[[f]]_\infty
\ge
(1+R)^{-2d}[[f]]_{\mu,\infty}.
\]
Therefore,
\[
[\det J_f^{{\bf r}_0}]_\infty
\ge
C_1(R,d)[[f]]_{\mu,\infty}.
\]
By Corollary~\ref{cor-GM-type},
\[
\|\det J_f^{{\bf r}_0}\|_{L^1(\mu)}
\ge
C_2(M,d)[\det J_f^{{\bf r}_0}]_\infty
\ge
C_3(M,R,d)[[f]]_{\mu,\infty}.
\]
By \eqref{eq-decomp},
\begin{equation}\label{eq-fin-dim}
	\int_{\mathbb R^\infty}\Delta_f^{1/2}\,d\mu
	\ge
	\|\det J_f^{{\bf r}_0}\|_{L^1(\mu)}
	\ge
	C_3(M,R,d)[[f]]_{\mu,\infty}.
\end{equation}

Now let $f=(f_1,f_2)$ with $f_1,f_2\in\mathcal P_d(\mu)$, and write
\[
f_1(x)=\sum_{|\mathbf m|\le d}a_{\bf m}^\mu(f_1)U_{\mathbf m}(x),
\qquad
f_2(x)=\sum_{|\mathbf m|\le d}a_{\bf m}^\mu(f_2)U_{\mathbf m}(x).
\]
If $[[f]]_{\mu,\infty}=0$, there is nothing to prove. Otherwise, we choose
multi-indices $\mathbf p$ and $\mathbf q$ with
$|\mathbf p|=|\mathbf q|=d_\mu(f)$ such that
\[
|a_{\bf p}^\mu(f_1)a_{\bf q}^\mu(f_2)
-
a_{\bf q}^\mu(f_1)a_{\bf p}^\mu(f_2)|
\ge
2^{-1}[[f]]_{\mu,\infty},
\]
and choose $N_0$ such that
\[
\operatorname{supp}\mathbf p\cup\operatorname{supp}\mathbf q
\subset
\{1,\ldots,N_0\}.
\]
For $N\ge N_0$, define
$f^{(N)}=(f_1^{(N)},f_2^{(N)})$ as in \eqref{eq-trunc}. Then
\[
[[f^{(N)}]]_{\mu,\infty}
\ge
2^{-1}[[f]]_{\mu,\infty},
\]
and \eqref{eq-fin-dim} gives
\begin{equation}\label{eq-trunc-est}
	\int_{\mathbb R^\infty}\Delta_{f^{(N)}}^{1/2}\,d\mu
	\ge
	C_4(M,R,d)[[f]]_{\mu,\infty}.
\end{equation}

Since $f_i^{(N)}\to f_i$ in $L^2(\mu)$ for $i\in\{1,2\}$,
Corollary~\ref{cor-grad} yields
\[
\|\nabla f_i-\nabla f_i^{(N)}\|_{L^2(\mu)}\to 0
\quad \forall i\in\{1,2\}.
\]
For $u,v\in\ell^2$, $u\ne 0$, we have
\[
|u|^2|v|^2-\langle u,v\rangle^2
=
|u|^2
\bigl|
v-\bigl\langle\tfrac{u}{|u|},v\bigr\rangle\tfrac{u}{|u|}
\bigr|^2.
\]
Therefore,
\[
\bigl|
\bigl(|u|^2|v|^2-\langle u,v\rangle^2\bigr)^{1/2}
-
\bigl(|u|^2|w|^2-\langle u,w\rangle^2\bigr)^{1/2}
\bigr|
\le
|u||v-w|
\]
for all $u,v,w\in\ell^2$. Hence,
\[
|\Delta_f^{1/2}-\Delta_{f^{(N)}}^{1/2}|
\le
|\nabla f_1||\nabla f_2-\nabla f_2^{(N)}|
+
|\nabla f_1-\nabla f_1^{(N)}||\nabla f_2^{(N)}|.
\]
Thus, by the Cauchy--Schwarz inequality,
\[
\|\Delta_f^{1/2}-\Delta_{f^{(N)}}^{1/2}\|_{L^1(\mu)}
\le
\|\nabla f_1\|_{L^2(\mu)}
\|\nabla f_2-\nabla f_2^{(N)}\|_{L^2(\mu)}
+
\|\nabla f_1-\nabla f_1^{(N)}\|_{L^2(\mu)}
\|\nabla f_2^{(N)}\|_{L^2(\mu)}.
\]
Therefore, passing to the limit
in \eqref{eq-trunc-est}, we obtain
\[
\int_{\mathbb R^\infty}\Delta_f\,d\mu
\ge
\biggl(\int_{\mathbb R^\infty}\Delta_f^{1/2}\,d\mu\biggr)^2
\ge
C_4(M,R,d)^2[[f]]_{\mu,\infty}^2,
\]
which concludes the proof.
\qed

\section*{Use of AI Tools}

ChatGPT was used for language editing, stylistic suggestions, and draft wording for
selected passages. All AI-generated
text was checked, corrected where necessary, and
substantially revised by the author. The author take full responsibility for
the content of the paper.

\section*{Acknowledgements}

The author was supported by the AEI grants 
RYC2023-043616-I and
PID2025-169712NA-I00 funded by MICIU/AEI/10.13039/501100011033,
and by the Spanish State Research Agency, through the Severo Ochoa and Mar\'ia de Maeztu Program for Centers and
Units of Excellence in R\&D (CEX2020-001084-M).
The author thanks CERCA Programme (Generalitat de Catalunya) for institutional support.
{\sloppy
	
}

\end{document}